\newtheorem{theorem}{Theorem}[section]
\newtheorem{lemma}[theorem]{Lemma}
\newtheorem{corollary}[theorem]{Corollary}
\newtheorem{proposition}[theorem]{Proposition}
\theoremstyle{definition}
\newtheorem{definition}[theorem]{Definition}
\newtheorem{example}[theorem]{Example}
\newtheorem{remark}[theorem]{Remark}
\newcommand{\Var}{\operatorname{Var}}
\newcommand{\ges}{\geqslant}
\newcommand{\les}{\leqslant}
\begin{document}

\title{The Lindeberg theorem for Gibbs-Markov dynamics}
\author{Manfred Denker, Samuel Senti, Xuan Zhang}
\date{}
\maketitle

\begin{abstract}
A dynamical array consists of a family of functions $\{f_{n,i}: 1\les i\les k_n, n\ges 1\}$ and a family of initial times $\{\tau_{n,i}: 1\les i\les k_n, n\ges 1\}$. For a dynamical system $(X,T)$ we identify  distributional limits for sums of the form
$$  S_n= \frac 1{s_n}\sum_{i=1}^{k_n} [f_{n,i}\circ T^{\tau_{n,i}}-a_{n,i}]\qquad n\ges 1$$
for suitable (non-random) constants $s_n>0$ and $a_{n,i}\in 
\mathbb R$. We derive a Lindeberg-type central limit theorem for dynamical arrays. Applications include new central limit theorems for functions which are not locally Lipschitz continuous and central limit theorems for statistical functions of time series obtained from Gibbs-Markov systems.
Our results, which hold for more general dynamics, are stated in the context of Gibbs-Markov dynamical systems for convenience.
\end{abstract}

\section{Introduction}
Probabilistic methods have been used for a long time in connection with number theory and some of these applications have formulations in terms of dynamical systems, but it took more than 50 years to realize the general importance for dynamics.
Continued fraction is a typical example of such a common approach in number theory and dynamics. While the ergodic theorem  has a direct counterpart in Kolmogorov's strong law of large numbers, the classical de Moivre-Laplace central limit theorem describing its fluctuation about the mean has none.
Today, central limit theorems (CLT)  are widespread in the study of fluctuations of Birkhoff ergodic sums in dynamical systems. Ideas borrowed from probability theory such as stationary mixing processes \cite{IbragimovLinnik1971} and Gordin's martingale-coboundary method \cite{Gordin1969} are commonly used to prove central limit theorems.
The survey paper \cite{Denker1989} has a comprehensive list of references up to 1986. More recently, Chazottes \cite{Chazottes2015} reviewed probabilistic laws for ergodic sums in dynamical systems modeled by Young towers (\cite{Young1998}). A central limit theorem for Markov fibred systems with the Schweiger property was proven in \cite{AaronsonDenkerUrbanski1993}. Examples of such systems include parabolic rational maps, Young towers and Gibbs-Markov maps. A CLT for general rational maps was proven in \cite{DenkerPrzytyckiUrbanski1996} using Gordin's method. All these results are concerned with Birkhoff sums. In order to obtain a CLT, the observables in the  Birkhoff sums are usually assumed to be H\"older continuous. This is partly due to the popular spectral gap method, which usually holds on Banach spaces endowed with H\"older norms. On the other hand it is still an open problem to determine the class of functions in $L^2$ satisfying the CLT. 

From an applied viewpoint, ergodic sums provide only a limited method to draw conclusions on a dynamical system. A much wider approach is formulated  in terms of  design of experiments where different time series and their interplay are considered. This leads to the need of analyzing arrays containing different ergodic sums. To our knowledge this concept was first formulated for maps of the interval and some special statistical functionals in \cite{Denker1982}.  Recently, \cite{HaydnNicolVaientiEtAl2013} used a special form of such an approach to obtain CLT for  shrinking targets. In other directions, one should also note CLTs in the settings of random dynamics or sequential dynamics such as in \cite{ConzeRaugi2007, CohenConze2017}. Lindeberg's central limit theorem deals with arrays of independent random variables, i.e. families of random variables defined on row-wise different probability spaces. We formulate Lindeberg's central limit theorem for dynamical arrays, and prove CLTs for arrays in dynamical systems, here Gibbs-Markov maps.
Examples include certain countable state Markov chains and Markov maps of the unit interval given in \cite{AaronsonDenker2001} as well as parabolic rational maps in \cite{AaronsonDenkerUrbanski1993}. 
Other examples can be found in \cite{AaronsonDenker1999}. 
We use two classical methods: the characteristic operators approach as in \cite{Rousseau-Egele1983} and Lindeberg's method as in \cite{Lindeberg1922} for blocks to prove CLTs. It will be clear from the proof that our results can be extended to more general systems since only spectral properties of transfer operators and metric properties are taken from Gibbs-Markov systems. It is also clear how to extend the results to Young towers over Gibbs-Markov maps. Recent development can be found in \cite{Thomine2014}. For simplicity, we keep our discussions restricted to Gibbs-Markov systems.

Dynamical arrays have many applications. For instance, one may use an array of H\"older continuous observables to approximate Birkhoff sums of observables of lesser regularity. An example is given in Corollary \ref{cor:lesreg}. In comparison, note that Gou\"ezel \cite{Goueezel2010} proved a CLT for Birkhoff sums of observables with H\"older norm in $L^\eta$, where $0<\eta<1$. In another paper \cite{DenkerSentiZhang2017} we have used CLTs for arrays to study fluctuations for ergodic sums over periodic orbits. Another possible application is through coupling Birkhoff sums of different dynamical systems. 
 
We recall some background material on Gibbs-Markov systems and spectral properties of their transfer operators in Sections \ref{sec:GibbsMarkov} and \ref{sec:transfer}. Section \ref{sec:clt} contains a CLT (Theorem \ref{thm:CLTbd}) for sequences of Birkhoff sums  $\sum_{i=1}^{k_n} f_{n}\circ T^i$ ($n\ges 1$). Although this is a special dynamical array the central limit theorem is treated separately since the method of proof is different from the other main theorem and may have future applications to other dynamical systems. Such theorems provide central limit theorems for Birkhoff sums $\sum_{i=1}^n f\circ T^i$ for certain functions which are not Lipschitz (or H\"older) continuous. We provide one easy example and others are not difficult to obtain. Section \ref{sec:lindeberg} contains the Lindeberg CLT (Theorem \ref{thm:CLTdarray}) for dynamical arrays. Here we deal with the CLT in its most general form as loosely formulated in the abstract. The precise statement and assumptions are presented at the beginning of this section. There are many applications of this theorem. In Section \ref{sec:stat} we provide one of them by showing the asymptotic normality of the Wilcoxon two sample rank statistics. Other examples are given in \cite{Zhang2015} and will be derived elsewhere. To get an idea of the scope of other possible applications, one may consult \cite{Ferguson1996} or similar expositions.   
 
 \section{Gibbs-Markov systems}
 \label{sec:GibbsMarkov}
 Gibbs-Markov systems were first formulated in \cite{AaronsonDenker2001}. Let $(\Omega,\mathcal B,\mu,T)$ denote a nonsingular transformation of a standard probability
space. Consider a countable partition $\alpha$ of $\Omega\mod \mu$, $\alpha=\{a_i:i\in I\}$, and denote the $\sigma$-algebra generated by $\alpha$ by $\sigma(\alpha)$. For $x,y\in \Omega$, define 
$$s(x,y):=\min_{n\in\mathbb N}\{n+1:T^n(x) \text{ and } T^n(y) \text{ belong to different elements of } \alpha \}.$$
For any $r\in(0,1)$, set $r(x,y):= r^{s(x,y)}$ on $\Omega$, which will become a metric. We use the same letter $r$ to express the dependence of the metric on the choice of $r$. It will be clear in the subsequent context when $r$ represents a number or a metric.
\begin{definition}\label{def:gibbsmarkov}A quintuple $(\Omega,\mathcal{B},\mu,T,\alpha)$ is called a Gibbs-Markov map (or system) if the following four conditions hold modulo $\mu$. 
\begin{enumerate}
	\item $\alpha$ is a strong generator of $\mathcal{B}$ by $T$, i.e. $\sigma(\{T^{-n}\alpha:n\geqslant 0\})=\mathcal{B}$.
	\item For every $a\in\alpha$, $Ta\in \sigma(\alpha)$ and the restriction $T|_{a}$ is invertible and (two-sided) nonsingular. 
	\item $\inf\limits_{a\in\alpha}\mu(Ta)>0$.
	\item  For each $n\geqslant 1$ and $a\in\bigvee_{i=0}^{n-1}T^{-i}\alpha$, denote the $\mu$-nonsingular inverse branch $T^{-n}|_{T^n a}$ by $v_a: T^n a\rightarrow a$ and its Radon-Nikodym derivative by $v'_a$. Then, there exist $r\in (0,1)$ and $M>0$ such that for any $n\geqslant1, a\in\bigvee_{i=0}^{n-1}T^{-i}\alpha$ and $x,y\in T^na$,
			\begin{equation}\label{eq:distort}\left|\dfrac{v'_a(x)}{v'_a(y)}-1\right|\leqslant M\cdot r(x,y).
			\end{equation}
\end{enumerate}
\end{definition}

\begin{remark}
Usually we do not specify $\mathcal B$ and write only $(\Omega,\mu, T, \alpha)$. Also note that a number $r$ and hence the metric $r(\cdot,\cdot)$ are determined within the definition of a Gibbs-Markov map.
\end{remark}
We will work with the following Banach spaces: given a Gibbs-Markov system $(\Omega,\mu,T,\alpha)$ and any partition $\rho$ of $\Omega$, the H\"older norm of a function $f:\Omega\to \mathbb C$ subject to the partition $\rho$ is defined by
\begin{equation*}
D_\rho f:=\sup\limits_{b\in\rho}\sup\limits_{x, y\in b, x\neq y}\frac{|f(x)-f(y)|}{r(x,y)},
\end{equation*}
where  $\sup$ is understood to be taken $\mu$ almost everywhere. 
Denote the usual $L^q$-norm by $\|\cdot\|_q$, $1\leqslant q\leqslant\infty$. Then
\begin{equation*}
\|f\|_{\infty,\rho}:=\|f\|_\infty+D_{\rho} f
\end{equation*}
defines a larger norm. Denote the subspace consisting of functions of finite $\|\cdot\|_{\infty,\rho}$ norm by $L^\infty_\rho$. It is standard to show that $L^\infty_\rho$ is a Banach space. 

\begin{remark}
Throughout this paper we will always assume that $(\Omega,\mu, T, \alpha)$ is a topologically mixing and measure-preserving Gibbs-Markov system, here topologically mixing means that
for any $a, b\in\alpha$, there is $n_{a,b}\in\mathbb N$ such that for every $n>n_{a,b}$, $b\subset T^n a$.
\end{remark}

 \section{Transfer operator and characteristic function operator}\label{sec:transfer}
We continue setting up the theory for a Gibbs-Markov system $(\Omega,\mu, T, \alpha)$ by introducing its transfer operators. Since $T\alpha\subset \sigma(\alpha)$, it follows that for every $n\in\mathbb N$, $T^n(\bigvee_{i=0}^{n-1}T^{-i}\alpha)=T\alpha$. Fix a partition $\beta$ (which may be coarser than $\alpha$) such that 
\begin{equation}\label{eq:partbeta}\sigma(T\alpha)=\sigma(\beta).\end{equation}
 The Perron-Frobenius transfer operator $\mathcal{L}_T:L^1(\mu)\rightarrow L^1(\mu)$ is defined by:
 $$  \mathcal{L}_Tf:=\sum_{b\in\beta}{\bf 1}_b\sum_{a\in\alpha,Ta\supset b}v'_a\cdot f\circ v_a,$$
and  the transfer operator for $T^n$ is hence of the form $$\mathcal{L}_{T^n}f=\sum_{b\in\beta}{\bf 1}_b\sum_{a\in\bigvee_{i=0}^{n-1}T^{-i}\alpha,T^na\supset b}v'_a\cdot f\circ v_a.$$
$\mathcal L_T$ satisfies and is uniquely characterized by:
  \begin{equation*}\label{eq:transfer}
   \int_\Omega \mathcal{L}_Tf\cdot gd\mu=\int_\Omega f\cdot g\circ Td\mu, \qquad \forall g\in L^\infty(\mu).
  \end{equation*}
It can be easily derived from the above equation that $$\mathcal{L}_{T^n}=\mathcal{L}^{n}_T$$ and since $\mu$ is $T$-invariant 
\begin{equation}\label{eq:ev1}\mathcal{L}_T {\mathbf 1}={\bf 1}.
\end{equation} 
We will use $\mathcal{L}$ for $\mathcal{L}_T$ when $T$ is fixed.  
 
 Given a measurable function $f:\Omega\to\mathbb R$ and $t\in\mathbb R$, the characteristic function operator $\mathcal{L}_{f,t}:L^1(\mu)\to L^1(\mu)$ is defined as:
 $$\mathcal{L}_{f,t}g:=\mathcal{L}(e^{itf}\cdot g), \quad \forall g\in L^1(\mu).$$
 Then $$ \mathcal L_{f,0}=\mathcal L.$$
Note that $D_\alpha(\cdot) \les D_\beta(\cdot) \les D_\Omega(\cdot) \les\max\{\frac{2\|\cdot\|_\infty}r, D_\alpha(\cdot) \}$, hence the functions in $L_\beta^\infty$ have finite $D_\alpha$ norms, and the norm $\|\cdot\|_{\infty,\beta}$ is equivalent to the norm $\|\cdot\|_{\infty,\alpha}$ and to the norm $\|\cdot\|_{\infty,\Omega}$. From now on, we write for simplicity
$$L:=L_\beta^\infty, \qquad \|\cdot\|:=\|\cdot\|_{\infty,\beta}.$$ As no confusion should appear, we use the same notation $\|\cdot\|$ for the operator norm on $L$. It is not hard to see that $\mathcal L$ and $\mathcal L_{f,t}$ are both bounded linear operators on $L$. In fact, we have the following estimates.
 \begin{lemma}[{\cite[Proposition 2.1, Theorem 2.4]{AaronsonDenker2001}}]\label{lem:df} There exist constants $M$ and $M_1$ such that for any $f, g\in L$ and $s,t\in \mathbb R$, we have
  \begin{equation}\label{eq:df}
   \|\mathcal L^n_{f,t}g\|\leqslant (M+M_1D_\alpha e^{itf})(r^nD_\beta g+\|g\|_1)
  \end{equation}
  and
  \begin{equation*}
   \|\mathcal L_{f,t}-\mathcal L_{f,s}\|\leqslant  (M+M_1 |s|D_\alpha f)(\|e^{i(t-s)f}-{\mathbf 1}\|_1+|t-s|D_\alpha f).
  \end{equation*}
  In particular, let $s=0$, we have for every $t\in\mathbb R$ and $f\in L$,
  \begin{equation*}\label{eq:close}
   \|\mathcal L_{f,t}-\mathcal L\|\leqslant 2M|t|\cdot\|f\|.
  \end{equation*}
 \end{lemma}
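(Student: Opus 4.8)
The plan is to establish the three displayed inequalities by unwinding the definition of $\mathcal L_{f,t}$ and exploiting the Gibbs-Markov distortion bound \eqref{eq:distort} together with the bounded-distortion estimates that control $\sum_{a\in\bigvee_{i=0}^{n-1}T^{-i}\alpha,\,T^na\supset b} v'_a$. Since these are quoted from \cite{AaronsonDenker2001}, I will mainly indicate why they take the stated shape. First I would prove \eqref{eq:df}: writing out $\mathcal L^n_{f,t}g = \mathcal L^n(e^{itS_nf}g)$ where $S_nf = \sum_{j=0}^{n-1} f\circ T^j$, one expands $\mathcal L^n(e^{itS_nf}g)(x) = \sum_{a} v'_a(v_a x)\, e^{itS_nf(v_a x)} g(v_a x)$ over the relevant cylinders $a\in\bigvee_{i=0}^{n-1}T^{-i}\alpha$. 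The $L^\infty$-part is immediate since $|e^{itS_nf}|=1$ and $\mathcal L^n\mathbf 1 = \mathbf 1$, giving $\|\mathcal L^n_{f,t}g\|_\infty \le \|g\|_\infty$. For the $D_\beta$ seminorm one estimates $|\mathcal L^n_{f,t}g(x) - \mathcal L^n_{f,t}g(y)|$ for $x,y$ in a common atom $b\in\beta$: the difference splits into (i) the variation of $v'_a$, handled by \eqref{eq:distort}, which contributes an $r^n$-contraction times a bounded-distortion factor absorbed into $M$; (ii) the variation of $g\circ v_a$, which contributes $r^n D_\beta g$ because $v_a$ contracts the metric $r(\cdot,\cdot)$ by a factor $r^n$; and (iii) the variation of $e^{itS_nf\circ v_a}$, bounded using $|e^{i\theta_1}-e^{i\theta_2}|\le|\theta_1-\theta_2|$ and the Lipschitz control $|S_nf(v_ax)-S_nf(v_ay)| \le \frac{1}{1-r} D_\alpha f \cdot r^n r(x,y)$ (telescoping the Birkhoff sum along the contracting inverse branch), which is where the factor $D_\alpha(e^{itf}) \asymp |t|\,D_\alpha f$ enters the constant $M_1$. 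Summing over $a$ with $\sum_a v'_a$-weights and using the uniform distortion bound collapses everything to the form $(M+M_1 D_\alpha e^{itf})(r^n D_\beta g + \|g\|_1)$, the $\|g\|_1$ term being the "averaging" part that survives even when $D_\beta g$ is large.

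Next I would derive the continuity estimate $\|\mathcal L_{f,t}-\mathcal L_{f,s}\| \le (M+M_1|s|D_\alpha f)(\|e^{i(t-s)f}-\mathbf 1\|_1 + |t-s|D_\alpha f)$. Write $\mathcal L_{f,t}g - \mathcal L_{f,s}g = \mathcal L\big((e^{itf}-e^{isf})g\big) = \mathcal L\big(e^{isf}(e^{i(t-s)f}-\mathbf 1)g\big)$, so this is $\mathcal L_{f,s}$ applied to $(e^{i(t-s)f}-\mathbf 1)g$. Applying the $n=1$ case of \eqref{eq:df} with observable $f$ and parameter $s$ to the function $h:=(e^{i(t-s)f}-\mathbf 1)g$ gives $\|\mathcal L_{f,s}h\| \le (M+M_1 D_\alpha e^{isf})(r D_\beta h + \|h\|_1)$; then one bounds $D_\alpha e^{isf} \le |s| D_\alpha f$, $\|h\|_1 \le \|e^{i(t-s)f}-\mathbf 1\|_1 \|g\|$ (using $|g|\le\|g\|_\infty\le\|g\|$), and $D_\beta h$ via the product/Leibniz rule $D_\beta(h) \le \|e^{i(t-s)f}-\mathbf 1\|_\infty D_\beta g + D_\alpha(e^{i(t-s)f}-\mathbf 1)\|g\|_\infty \le 2 D_\beta g + |t-s|D_\alpha f \cdot \|g\|$, all absorbed into the stated constants after renaming $M,M_1$. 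Finally, the special case $s=0$: here $\mathcal L_{f,0}=\mathcal L$, and the previous bound becomes $\|\mathcal L_{f,t}-\mathcal L\| \le M(\|e^{itf}-\mathbf 1\|_1 + |t|D_\alpha f)$; using the elementary $|e^{it f}-1|\le |t|\,|f| \le |t|\,\|f\|_\infty$ pointwise one gets $\|e^{itf}-\mathbf 1\|_1 \le |t|\|f\|_\infty$, and combining with $D_\alpha f \le \|f\|$ yields $\|\mathcal L_{f,t}-\mathcal L\| \le 2M|t|\,\|f\|$ after adjusting $M$.

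I expect the main obstacle to be bookkeeping rather than conceptual: keeping the $\|g\|_1$ versus $\|g\|_\infty$ and $D_\beta$ versus $D_\alpha$ distinctions straight through the Leibniz estimates, and verifying that the countable sums $\sum_{a:\,T^na\supset b} v'_a$ are uniformly bounded so that the Hölder-variation pieces genuinely contract at rate $r^n$ — this uniform bounded-distortion fact is exactly what condition (3) of Definition \ref{def:gibbsmarkov} and the estimate \eqref{eq:distort} are designed to give, and it is packaged in the cited \cite[Proposition 2.1, Theorem 2.4]{AaronsonDenker2001}. Since the lemma is explicitly attributed to that reference, in the paper itself one would simply cite it; the sketch above records how the three inequalities follow from the standard Gibbs-Markov machinery should a self-contained argument be wanted.
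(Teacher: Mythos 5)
The paper does not prove this lemma at all: it is quoted verbatim from \cite[Proposition 2.1, Theorem 2.4]{AaronsonDenker2001}, so there is no internal proof to compare against. Your sketch reconstructs the standard Doeblin--Fortet argument that the cited reference uses, and the overall architecture (expand $\mathcal L^n_{f,t}g=\mathcal L^n(e^{itS_nf}g)$ over $n$-cylinders, control $v'_a$ by the distortion bound \eqref{eq:distort}, split the oscillation into the three pieces, then reduce the second inequality to the $n=1$ case applied to $h=(e^{i(t-s)f}-{\bf 1})g$, and specialize $s=0$) is the right one. Two steps, however, are inaccurate as written.

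First, the claim that the $L^\infty$ part is ``immediate'' via $\|\mathcal L^n_{f,t}g\|_\infty\les\|g\|_\infty$ does not suffice: $\|g\|_\infty$ is not dominated by $r^nD_\beta g+\|g\|_1$ (take $g=c{\bf 1}_b$ for a small atom $b\in\beta$, so $D_\beta g=0$ and $\|g\|_1=c\mu(b)$ is tiny while $\|g\|_\infty=c$). The sup norm must also be run through the compare-to-average device, $|g(v_ax)|\les\mu(a)^{-1}\int_a|g|\,d\mu+r^nD_\beta g$, before summing against the distortion-controlled weights $v'_a\asymp\mu(a)/\mu(T^na)$; only then does the $\|g\|_1$ term appear. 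Second, the telescoping bound $|S_nf(v_ax)-S_nf(v_ay)|\les\frac{1}{1-r}D_\alpha f\cdot r^n\,r(x,y)$ is wrong: since $r(T^jv_ax,T^jv_ay)=r^{n-j}r(x,y)$, the sum $\sum_{j=0}^{n-1}r^{n-j}$ is dominated by its \emph{largest} term $r$ (from $j=n-1$), not by $r^n$, giving $\frac{r}{1-r}D_\alpha f\cdot r(x,y)$ with no $r^n$ gain. This is not a cosmetic point --- it is exactly why, in the stated inequality, the factor $D_\alpha e^{itf}$ multiplies $\|g\|_1$ without any $r^n$ attached. Your overclaimed estimate would imply a strictly stronger (and false) version of the lemma; once corrected it still yields the stated form, so the conclusion survives, but the step as written should be fixed.
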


The following lemma, adapted from \cite[Proposition 3]{Rousseau-Egele1983}, provides the Taylor expansion of $\mathcal L_{f,t}$ around $\mathcal L$.
 \begin{lemma}
 For any $f\in L$ and $t\in\mathbb R$, there exist bounded linear operators $\mathcal L_f^{(n)}$ and $\mathcal L_{f,t}^{(n)}$ on $L$ such that
 \begin{equation*}
  \mathcal L_{f,t}=\mathcal L +\sum_{n=1}^\infty \frac{t^n}{n!}\mathcal L_f^{(n)}
 \end{equation*}
 converges absolutely with $\|\mathcal L_f^{(n)}\|\leqslant \|\mathcal L\|\|f\|^n$, and for every $m\in\mathbb N$,
 \begin{equation}\label{eq:expandL}
  \mathcal L_{f,t}=\mathcal L +t\mathcal L_f^{(1)}+\cdots +\frac{t^{m-1}}{(m-1)!}\mathcal L_f^{(m-1)}+\mathcal L_{f,t}^{(m)}
 \end{equation}
 with $\|\mathcal L_{f,t}^{(m)}\|\leqslant\|\mathcal L\||t|^m\|f\|^m e^{|t|\|f\|}.$
 \end{lemma}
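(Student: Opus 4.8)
The plan is to expand $e^{itf}$ as a power series in the Banach algebra $L$ and then push the bounded operator $\mathcal L$ through it term by term, essentially following Rousseau-Egele. First I would isolate the one structural fact the whole argument rests on, namely that $(L,\|\cdot\|)$ is a Banach algebra: from $\|fg\|_\infty\les\|f\|_\infty\|g\|_\infty$ and $D_\beta(fg)\les\|f\|_\infty D_\beta g+\|g\|_\infty D_\beta f$ one gets $\|fg\|\les\|f\|\,\|g\|$, hence $\|f^n\|\les\|f\|^n$ for all $n$. I would then \emph{define}
$$\mathcal L_f^{(0)}:=\mathcal L,\qquad \mathcal L_f^{(n)}g:=i^n\,\mathcal L(f^n g)\quad(n\ges1,\ g\in L).$$
Each $\mathcal L_f^{(n)}$ is clearly linear, and since $\mathcal L$ is bounded on $L$ (as recorded above) one has $\|\mathcal L_f^{(n)}g\|=\|\mathcal L(f^n g)\|\les\|\mathcal L\|\,\|f^n g\|\les\|\mathcal L\|\,\|f\|^n\|g\|$; thus $\mathcal L_f^{(n)}$ is bounded with $\|\mathcal L_f^{(n)}\|\les\|\mathcal L\|\,\|f\|^n$, which is the asserted bound.

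Next I would establish the series identity. Since $\sum_{n\ges0}\frac{|t|^n}{n!}\|f^n g\|\les\|g\|\sum_{n\ges0}\frac{(|t|\|f\|)^n}{n!}=\|g\|\,e^{|t|\|f\|}<\infty$, the series $\sum_{n\ges0}\frac{(it)^n}{n!}f^n g$ converges absolutely in $L$; its sum is $e^{itf}g$, because the partial sums converge to $e^{itf}g$ already in the weaker norm $\|\cdot\|_\infty\les\|\cdot\|$, so the two limits must agree. Applying the bounded operator $\mathcal L$ and using continuity to interchange it with the sum yields
$$\mathcal L_{f,t}g=\mathcal L(e^{itf}g)=\sum_{n=0}^\infty\frac{t^n}{n!}i^n\mathcal L(f^n g)=\mathcal Lg+\sum_{n=1}^\infty\frac{t^n}{n!}\mathcal L_f^{(n)}g.$$
By the operator-norm bound just obtained, $\sum_{n\ges1}\frac{|t|^n}{n!}\|\mathcal L_f^{(n)}\|\les\|\mathcal L\|(e^{|t|\|f\|}-1)<\infty$, so the operator series $\sum_{n\ges1}\frac{t^n}{n!}\mathcal L_f^{(n)}$ converges absolutely in operator norm; this is the first display of the lemma.

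For the truncated expansion I would simply set
$$\mathcal L_{f,t}^{(m)}:=\mathcal L_{f,t}-\mathcal L-\sum_{n=1}^{m-1}\frac{t^n}{n!}\mathcal L_f^{(n)}=\sum_{n=m}^\infty\frac{t^n}{n!}\mathcal L_f^{(n)},$$
which is a bounded operator on $L$ by the previous step, and then estimate, with $x:=|t|\|f\|$,
$$\|\mathcal L_{f,t}^{(m)}\|\les\|\mathcal L\|\sum_{n=m}^\infty\frac{x^n}{n!}=\|\mathcal L\|\,x^m\sum_{k=0}^\infty\frac{x^k}{(k+m)!}\les\|\mathcal L\|\,x^m\sum_{k=0}^\infty\frac{x^k}{k!}=\|\mathcal L\|\,|t|^m\|f\|^m e^{|t|\|f\|},$$
using $(k+m)!\ges k!$; this is exactly the stated remainder bound. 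I do not expect a genuine obstacle here. The only step requiring a little care is the termwise application of $\mathcal L$ to the infinite sum, which is justified precisely because the exponential series converges in the norm of $L$, not merely pointwise — and that is why isolating the Banach-algebra property of $L$ at the outset is the one thing worth doing before the computation.
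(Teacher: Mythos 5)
Your proposal is correct and follows essentially the same route as the paper: establish the Banach-algebra inequality $\|fg\|\les\|f\|\,\|g\|$, define $\mathcal L_f^{(n)}(\cdot)=i^n\mathcal L(f^n\cdot)$, and deduce the expansion and remainder bound from absolute convergence of the exponential series in $L$. You merely fill in details the paper leaves implicit (the identification of the $L$-limit with $e^{itf}g$ and the explicit tail estimate for $\mathcal L_{f,t}^{(m)}$), which is fine.
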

\begin{remark}
 $\mathcal L_f^{(n)}$ are just derivatives of $\mathcal L_{f,t}$ around $t=0$ when $f$ is fixed.
\end{remark}
 \begin{proof} Since for any $f,g\in L$,
  $$\|f\cdot g\|_\infty\leqslant \|f\|_\infty \cdot \|g\|_\infty \text{ and }$$
  $$D_\beta (f\cdot g) \leqslant  D_\beta(f) \cdot\|g\|_\infty+\|f\|_\infty\cdot D_\beta(g),$$
  we have \begin{equation}\label{eq:normprod}\|f\cdot g\|\leqslant\|f\|\cdot\|g\|.\end{equation}
  Therefore $\|f^n\|\leqslant \|f\|^n$, and
  $$\|\sum_{n=0}^\infty\frac{(it)^n}{n!}\mathcal L (f^ng)\|\leqslant \sum_{n=0}^\infty\frac{1}{n!}\|\mathcal L\|\|f\|^n|t|^n\|g\|$$
  converges absolutely. Let $\mathcal L_f^{(n)}(\cdot):=i^n\mathcal L(f^n\cdot)$ and the expansion follows.
 \end{proof}

One of the underlying tools throughout this paper is the spectral gap property of the transfer operator $\mathcal L$, which has been proved in \cite[Theorem 1.6]{AaronsonDenker2001}.  We explain it briefly here. Because that $L$-bounded sets are precompact in $L^1$ and because of Lemma \ref{lem:df} and a theorem of Ionescu-Tulcea and Marinescu (\cite{IonescuTulceaMarinescu1950}),  $\mathcal L$ is quasi-compact on $L$. Notice that 1 is an eigenvalue of $\mathcal L$ and is a maximal eigenvalue of $\mathcal L$ on $L$ by \eqref{eq:ev1} since $\mathcal L$ contracts $L^\infty$. Also it is known that a (topologically) mixing Gibbs-Markov map is exact (\cite{AaronsonDenkerUrbanski1993, AaronsonDenker2001}) and hence is strong-mixing when it is measure-preserving. So $1$ is the unique maximal eigenvalue and is simple. Hence the transfer operator $\mathcal L$ of a mixing Gibbs-Markov map has the spectral gap property on $L$, namely one can decompose $\mathcal L$ on $L$ as
 \begin{equation}\label{eq:trsfdecomp}
 \mathcal{L}=P+N
 \end{equation}
 so that $Pf=\int_\Omega fd\mu$, $P N=N P=0$ and $\mathfrak r(N)<1$, where $\mathfrak r(\cdot)$ denotes the spectral radius. $P$ is the eigenprojection of $\mathcal L$ with respect to the eigenvalue $1$ and the spectrum of $N$ is all the remaining spectrum of $\mathcal L$. For any complex number $z$ not in the spectrum of $\mathcal L$, denote by $R(z;\mathcal L)$ the resolvent of $\mathcal L$, $(z\mathcal I-\mathcal L)^{-1}$. According to \cite[VII]{DunfordSchwartz1958}, one can calculate $P$ and $N$ by integrating the product of the resolvent and suitable analytic functions on neighborhoods of the spectrum of $\mathcal L$. In fact,  
\begin{equation}\label{eq:PNresolv}
P=\frac1 {2\pi i}\int_{C_1}R(z;\mathcal L)dz, \quad N=\frac1 {2\pi i}\int_{C_2}z R(z;\mathcal L)dz
\end{equation} where $C_1$ is a small circle around $1$ of radius  $\frac{1-\frak r(N)}3$ and $C_2$ is a circle around $0$ of radius $\frac{1+2\frak r(N)}{3}$ so that $C_1$ and $C_2$ are disjoint and that the spectrum of $\mathcal L$ except for the eigenvalue $1$ is totally contained within $C_2$. For every positive integer $k$,
$$N^k= \frac1 {2\pi i}\int_{C_2}z^k R(z;\mathcal L)dz.$$
We will call $$\rho_1:=\frac{1-\frak r(N)}3,\quad \rho_2:=\frac{1+2\frak r(N)}{3}.$$
By perturbation theory, the characteristic function operators also satisfy the above properties.
 \begin{lemma}\label{lem:decomp}
  There exists a real number $a>0$ such that if  $ |t|\cdot\|f\|<a $
  then $\mathcal L_{f,t}$ has the spectral gap property on $L$ with the decomposition:
  $$\mathcal L_{f,t}=\lambda_{f,t}P_{f,t}+N_{f,t}$$
  where
  \begin{enumerate}
   \item$\lambda_{f,t}$ is the unique eigenvalue of the largest modulus of $\mathcal L_{f,t}$, $\lambda_{f,t}$ is a simple eigenvalue and $|\lambda_{f,t}|\in(1-\rho_1,1+\rho_1)$;
   \item $P_{f,t}$ is the eigenprojection of $\mathcal L_{f,t}$ with respect to $\lambda_{f,t}$, in the form
$$P_{f,t}=\frac1 {2\pi i}\int_{C_1}R(z;\mathcal L_{f,t})dz;$$
  \item $\frak r(N_{f,t})<\rho_2$ and $P_{f,t}N_{f,t}=N_{f,t}P_{f,t}=0$, in fact, $$N_{f,t}=\frac1 {2\pi i}\int_{C_2}z R(z;\mathcal L_{f,t})dz;$$
   \item fix an $f\in L$, then $t\mapsto \lambda_{f,t}, t\mapsto P_{f,t}$ and $t\mapsto N_{f,t}$ are analytic on $(-a/\|f\|, a/\|f\|)$.
  \end{enumerate}
Here $C_1, C_2$ are the same circles as in \eqref{eq:PNresolv}.
 \end{lemma}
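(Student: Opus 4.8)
The plan is to derive everything from the spectral-gap decomposition \eqref{eq:trsfdecomp} of $\mathcal L$ together with the norm estimate $\|\mathcal L_{f,t}-\mathcal L\|\le 2M|t|\,\|f\|$ from Lemma \ref{lem:df}, via the standard analytic perturbation theory of Kato/Dunford–Schwartz, applied to the single operator $\mathcal L_{f,t}$ regarded as a perturbation of $\mathcal L_{f,0}=\mathcal L$. The key point is that everything in the statement is controlled purely by the quantity $\|\mathcal L_{f,t}-\mathcal L\|$, which is $\le 2M|t|\,\|f\|$, so the constant $a$ can be chosen uniformly in $f$ (it depends only on $M$, $\rho_1$, $\rho_2$, and on bounds for the resolvent $R(z;\mathcal L)$ on the fixed contours $C_1,C_2$), not on $f$ itself. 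This uniformity is what makes the dependence $a/\|f\|$ legitimate.

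First I would recall that since $C_1\cup C_2$ lies in the resolvent set of $\mathcal L$ and $z\mapsto R(z;\mathcal L)$ is continuous there, the quantity $\kappa:=\sup_{z\in C_1\cup C_2}\|R(z;\mathcal L)\|$ is finite. Then for any $z\in C_1\cup C_2$ one has the Neumann-series expansion
\begin{equation*}
R(z;\mathcal L_{f,t})=R(z;\mathcal L)\bigl(\mathcal I-(\mathcal L_{f,t}-\mathcal L)R(z;\mathcal L)\bigr)^{-1}
=\sum_{k=0}^\infty R(z;\mathcal L)\bigl[(\mathcal L_{f,t}-\mathcal L)R(z;\mathcal L)\bigr]^k,
\end{equation*}
which converges provided $\|\mathcal L_{f,t}-\mathcal L\|\,\kappa<1$, hence in particular whenever $2M|t|\,\|f\|\,\kappa<1$. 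So set $a:=(2M\kappa)^{-1}$ (shrinking it further if needed for the modulus estimate below); for $|t|\,\|f\|<a$ the contours $C_1,C_2$ lie in the resolvent set of $\mathcal L_{f,t}$ as well, and the series above is uniformly convergent on those contours and analytic in $t$.

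With that in hand I would define $P_{f,t}$ and $N_{f,t}$ by the contour integrals in items (2) and (3) — these are legitimate since the integrands are now holomorphic on a neighborhood of the contours — and read off the conclusions. Analyticity in $t$ (item 4) follows because the integrand depends analytically on $t$ (the map $t\mapsto \mathcal L_{f,t}$ is analytic by the Taylor expansion \eqref{eq:expandL}, equivalently by the Neumann series) and one may differentiate under the integral sign. The Riesz-functional-calculus identities $P_{f,t}^2=P_{f,t}$, $P_{f,t}N_{f,t}=N_{f,t}P_{f,t}=0$, $\mathcal L_{f,t}=\lambda_{f,t}P_{f,t}+N_{f,t}$ with $\mathcal L_{f,t}P_{f,t}=P_{f,t}\mathcal L_{f,t}$, and $\frak r(N_{f,t})<\rho_2$, are the usual consequences of the fact that the spectrum of $\mathcal L_{f,t}$ inside $C_1$ is nonempty (by continuity of the spectrum, since it is nonempty — namely $\{1\}$ — for $t=0$) and that the spectrum of $\mathcal L_{f,t}$ is split by the two contours. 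To get that the part of the spectrum inside $C_1$ is a single simple eigenvalue $\lambda_{f,t}$, I would note $\dim \operatorname{ran}P_{f,t}=\dim\operatorname{ran}P=1$: the projections $P_{f,t}$ converge to $P$ in operator norm as $t\to0$ (uniformly over $f$, from the series estimate), and norm-close projections have equal rank. Then $|\lambda_{f,t}|\in(1-\rho_1,1+\rho_1)$ because $\lambda_{f,t}$ lies inside $C_1$, the circle of radius $\rho_1$ about $1$; and $\lambda_{f,t}$ has the largest modulus since the rest of the spectrum sits inside $C_2$, i.e. has modulus $<\rho_2<1+\rho_1$ (possibly shrink $a$ so that $\rho_2<1-\rho_1$ fails—actually $\rho_2<1$ already suffices, and $1-\rho_1<1$, so one checks $\rho_2<1+\rho_1$, which holds by definition of $\rho_1,\rho_2$).

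The main obstacle, such as it is, is bookkeeping rather than depth: one must make sure the single constant $a$ is chosen \emph{before} $f$ is, i.e. that all the estimates (convergence of the Neumann series on $C_1\cup C_2$, norm-closeness of $P_{f,t}$ to $P$ forcing equal rank, the eigenvalue staying inside $C_1$) depend on $f$ and $t$ only through the product $|t|\,\|f\|$. This is exactly why Lemma \ref{lem:df} is stated with the bound $2M|t|\,\|f\|$ and why the scaling interval in item (4) is $(-a/\|f\|,a/\|f\|)$; writing $s:=t\|f\|$, the operator $\mathcal L_{f,t}$ moves within an $O(|s|)$-ball of $\mathcal L$, so the perturbation theory is uniform in $f$ on $|s|<a$.
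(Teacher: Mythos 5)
Your proposal is correct and follows essentially the same route as the paper, which simply invokes \cite[Proposition 4]{Rousseau-Egele1983} for this lemma and relies on exactly the resolvent Neumann series \eqref{eq:resolvent} and the uniformity of all estimates in the product $|t|\,\|f\|$ that you spell out (this is also how the paper proves Lemma \ref{lem:expansion}). One small correction: to see that $\lambda_{f,t}$ has strictly the largest modulus you need $\rho_2<1-\rho_1$ (not merely $\rho_2<1+\rho_1$), and this does hold without shrinking $a$, since $\rho_2=\frac{1+2\frak r(N)}{3}<\frac{2+\frak r(N)}{3}=1-\rho_1$ because $\frak r(N)<1$.
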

 
  This lemma is essentially \cite[Proposition 4]{Rousseau-Egele1983}. For our purpose we take expansions of the operators to higher orders in the next lemma.
  
 \begin{lemma}\label{lem:expansion}
  There exist constants $M>0$ and $0<a<M^{-1}$ such that if $|t|\cdot \|f\|<a$ is small enough, then
  \begin{enumerate}
   \item $P_{f,t}$ has an expansion $$ P_{f,t}=P+t P_f^{(1)}+\frac{t^2}{2} P_f^{(2)}+P_{f,t}^{(3)},$$ where $\|P^{(i)}_f\|\leqslant M\|f\|^i$, for $i=1,2$, and $\|P_{f,t}^{(3)}\|\leqslant M|t|^3\|f\|^3 e^{3|t|\|f\|}$;
   \item similarly, $\lambda_{f,t}$ expands as $$\lambda_{f,t}=1+t \lambda_f^{(1)}+\frac{t^2}{2} \lambda_f^{(2)}+\lambda_{f,t}^{(3)},$$ where $|\lambda^{(i)}_f|\leqslant M\|f\|^i$, for $i=1,2$, and $|\lambda_{f,t}^{(3)}|\leqslant M|t|^3\|f\|^3 e^{3|t|\|f\|}$;
   \item for all $n\in \mathbb N$, $$\|N^n_{f,t}{\bf 1}\|\leqslant \rho_2^n \frac{M|t|\|f\|}{1-M|t|\|f\|}.$$
  \end{enumerate}  
 \end{lemma}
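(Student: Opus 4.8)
The plan is to obtain all three expansions from the resolvent representations in Lemma \ref{lem:decomp} together with the Taylor expansion \eqref{eq:expandL} of $\mathcal L_{f,t}$ around $\mathcal L$. Throughout fix $f\in L$, write $\varepsilon:=|t|\cdot\|f\|$, and assume $\varepsilon<a$ with $a$ as in Lemma \ref{lem:decomp}, shrinking $a$ further if needed. First I would control the resolvent difference: for $z$ on either circle $C_1$ or $C_2$ one has the second resolvent identity
$$
R(z;\mathcal L_{f,t})-R(z;\mathcal L)=R(z;\mathcal L_{f,t})(\mathcal L_{f,t}-\mathcal L)R(z;\mathcal L),
$$
and since $\|\mathcal L_{f,t}-\mathcal L\|\les 2M|t|\cdot\|f\|$ by Lemma \ref{lem:df} while $\sup_{z\in C_1\cup C_2}\|R(z;\mathcal L)\|$ is a finite constant depending only on $\rho_1,\rho_2$, iterating this identity $m$ times gives a Neumann-type series
$$
R(z;\mathcal L_{f,t})=\sum_{k=0}^{m-1} R(z;\mathcal L)\big[(\mathcal L_{f,t}-\mathcal L)R(z;\mathcal L)\big]^k + O(\varepsilon^m),
$$
uniformly in $z\in C_1\cup C_2$, valid once $\varepsilon$ is small enough that the geometric series converges. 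Substituting \eqref{eq:expandL} with $m=3$ for $\mathcal L_{f,t}-\mathcal L = t\mathcal L_f^{(1)}+\tfrac{t^2}{2}\mathcal L_f^{(2)}+\mathcal L_{f,t}^{(3)}$ and collecting powers of $t$ gives an expansion of $R(z;\mathcal L_{f,t})$ in $t$ up to order $t^2$ with an explicit $O(|t|^3\|f\|^3 e^{3\varepsilon})$ remainder, the exponential factor coming from the bound on $\mathcal L_{f,t}^{(3)}$.

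Next I would integrate over $C_1$. By the formula for $P_{f,t}$ in Lemma \ref{lem:decomp}(2), integrating the above expansion term by term over $C_1$ yields
$$
P_{f,t}=P+tP_f^{(1)}+\tfrac{t^2}{2}P_f^{(2)}+P_{f,t}^{(3)},
$$
where $P_f^{(i)}$ is $\frac{1}{2\pi i}$ times the $C_1$-integral of the coefficient of $t^i$; since each such coefficient is a fixed finite sum of products of $\mathcal L_f^{(j)}$'s sandwiched between copies of $R(z;\mathcal L)$, and $\|\mathcal L_f^{(j)}\|\les\|\mathcal L\|\|f\|^j$, one reads off $\|P_f^{(i)}\|\les M\|f\|^i$ after enlarging $M$ to absorb the length of $C_1$ and $\sup_{C_1}\|R(z;\mathcal L)\|$. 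The remainder estimate $\|P_{f,t}^{(3)}\|\les M|t|^3\|f\|^3 e^{3\varepsilon}$ follows from the uniform remainder bound in the resolvent expansion. For part (2), note $\lambda_{f,t}=\operatorname{tr}(\mathcal L_{f,t}P_{f,t})$ on the one-dimensional range of $P_{f,t}$; more concretely, since $\lambda_{f,t}P_{f,t}=\frac{1}{2\pi i}\int_{C_1} z R(z;\mathcal L_{f,t})\,dz$, the same integration of the resolvent expansion over $C_1$ against $z$ gives $\lambda_{f,t}P_{f,t}$ as a series in $t$; applying this to ${\bf 1}$ and using $P{\bf 1}={\bf 1}$ together with the already-established expansion of $P_{f,t}$ isolates $\lambda_{f,t}=1+t\lambda_f^{(1)}+\tfrac{t^2}{2}\lambda_f^{(2)}+\lambda_{f,t}^{(3)}$ with the asserted bounds, after possibly shrinking $a$ so that $P_{f,t}{\bf 1}$ stays bounded away from $0$ in norm.

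For part (3), I would instead integrate over $C_2$: since $N_{f,t}^n=\frac{1}{2\pi i}\int_{C_2} z^n R(z;\mathcal L_{f,t})\,dz$, applying this to ${\bf 1}$ and using $N{\bf 1}=0$ (equivalently, that the leading term $\frac{1}{2\pi i}\int_{C_2} z^n R(z;\mathcal L){\bf 1}\,dz = N^n{\bf 1}=0$) leaves only the terms of order $\varepsilon$ and higher from the Neumann series; summing the geometric tail gives $\|N_{f,t}^n{\bf 1}\|\les \rho_2^n\cdot\frac{M|t|\|f\|}{1-M|t|\|f\|}$, where the factor $\rho_2^n$ is the bound on $|z^n|$ for $z\in C_2$ and the fraction is the value of the geometric series $\sum_{k\ges1}(C\varepsilon)^k$ with $C$ folded into $M$. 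The main obstacle is bookkeeping: one must keep the remainder bounds uniform in $z$ along both contours, track that every constant can be taken independent of $f$ and $t$ (only on $\mathcal L$, $\rho_1$, $\rho_2$), and verify the $e^{3\varepsilon}$ factor in the cubic remainders propagates correctly from \eqref{eq:expandL} through the contour integration; none of these steps is deep, but the constant-chasing must be done carefully so that a single pair $(M,a)$ works for all three statements simultaneously.
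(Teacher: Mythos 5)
Your proposal is correct and follows essentially the same route as the paper: expand the resolvent of $\mathcal L_{f,t}$ as a Neumann series around $R(z;\mathcal L)$, substitute the Taylor expansion \eqref{eq:expandL}, integrate over $C_1$ (resp.\ $C_2$) to get parts (1) and (3), and extract the scalar $\lambda_{f,t}$ from the rank-one projection for part (2). The only cosmetic difference is that the paper isolates $\lambda_{f,t}$ by pairing with a fixed linear functional $\varphi$ satisfying $\varphi(P)=1$ rather than by evaluating at $\mathbf 1$, which amounts to the same nondegeneracy argument you invoke.
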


 \begin{proof} We use notations from Lemma \ref{lem:decomp}. Notice that for any $z$ in the resolvent set of $\mathcal L$, if $|t|\cdot \|f\|$ is so small that  $\|\mathcal L_{f,t}-\mathcal L\|\cdot\|R(z, \mathcal L)\|<1$ then $z$ is also in the resolvent set of $\mathcal L_{f,t}$ and
  \begin{equation}\label{eq:resolvent}
   R(z;\mathcal L_{f,t})=R(z;\mathcal L)\sum_{n=0}^\infty((\mathcal L_{f,t}-\mathcal L)R(z;\mathcal L))^n
  \end{equation}
  converges absolutely.
  \begin{enumerate}
  \item We use the resolvent equation \eqref{eq:resolvent} to calculate $P_{f,t}$ as follows. Choose $a$ small enough such that $\|\mathcal L_{f,t}-\mathcal L\|\cdot \sup_{z\in C_1}\|R(z,\mathcal L)\|<1$ whenever $|t|\|f\|<a$. Then, denoting by $R:=R(z;\mathcal L)$,
  \begin{align*}
   P_{f,t}&=\frac{1}{2\pi i}\int_{C_1} R(z;\mathcal L_{f,t})dz\\
 &\overset{\eqref{eq:resolvent}}{=}\frac{1}{2\pi i}\int_{C_1}R\sum_{n=0}^\infty((\mathcal L_{f,t}-\mathcal L)R)^n dz\\
   &=P+\frac{1}{2\pi i}\int_{C_1}R\sum_{n=1}^\infty((\mathcal L_{f,t}-\mathcal L)R)^n dz\\
   &\overset{\eqref{eq:expandL}}{=}P+t\frac{1}{2\pi i}\int_{C_1}R\mathcal L_f^{(1)} Rdz
+ \frac{t^2}{2}\frac{1}{2\pi i}\int_{C_1}\left\{R\mathcal L_{f}^{(2)} R +2R\left(\mathcal L_f^{(1)}R\right)^2\right\}dz\\
&\quad+ \frac{1}{2\pi i}\int_{C_1}\left\{R\mathcal L_{f,t}^{(3)}R+ tR\mathcal L_{f,t}^{(2)}R\mathcal L_f^{(1)}R +tR\mathcal L_f^{(1)}R\mathcal L_{f,t}^{(2)}R +R\left(\mathcal L_{f,t}^{(2)}R\right)^2 \right.\\
&\quad \left.+ R\sum_{n=3}^\infty\left(\mathcal L_{f,t}^{(1)}R\right)^n\right\} dz.
\end{align*}
   Define corresponding operators to write the last equation in the form $$P_{f,t}=P+t P_f^{(1)}+ \frac{t^2}{2} P_f^{(2)}+ P_{f,t}^{(3)}.$$  
  Then there exist constants $M_1, M_2$ and $M_3$ such that when $|t|\|f\|<a$,
  \begin{align*}
   \|P_f^{(1)}\|&\leqslant M_3\|\mathcal L_f^{(1)}\|\leqslant M_1\|f\|,\\
   \|P_f^{(2)}\|&\leqslant M_3(\|\mathcal L_{f}^{(2)}\|+\|\mathcal L_f^{(1)}\|^2)\leqslant M_1\|f\|^2,\\
   \|P_{f,t}^{(3)}\|&\leqslant M_3(\|\mathcal L_{f,t}^{(3)}\|+|t|\cdot \|\mathcal L_{f,t}^{(2)}\|\cdot \|\mathcal L_f^{(1)}\|+ \|\mathcal L_{f,t}^{(2)}\|^2+\sum_{n=3}^\infty  M_2^n\|\mathcal L_{f,t}^{(1)}\|^n)\\
   &\leqslant M_1|t|^3\|f\|^3e^{3|t|\|f\|}.
  \end{align*} 
  \item Let $B$ be the Banach space of all bounded linear operators from $L$ to itself. Take a linear functional $\varphi\in B^*$ such that $\|\varphi\|_{B^*}=1$ and $\varphi(P)=1$. Then because
  \begin{equation*}\label{eq:eigenvalue}
   \mathcal L_{f,t} P_{f,t}=\lambda_{f,t} P_{f,t},
  \end{equation*}
  we have $$\lambda_{f,t}=\frac{\varphi(\mathcal L_{f,t} P_{f,t})}{\varphi(P_{f,t})}.$$
  Define $P_{f,t}^{(1)}= t P_f^{(1)}+ \frac{t^2}{2} P_f^{(2)}+ P_{f,t}^{(3)}$, then $\|P_{f,t}^{(1)}\|\leqslant M_1|t|\|f\|e^{|t|\|f\|}$. Choose $a$ small such that $ae^a<M_1^{-1}$ then when $|t|\|f\|<a$, $$\frac{1}{\varphi(P_{f,t})}=\frac{1}{1+\varphi(P_{f,t}^{(1)})}=\sum_{n=0}^{\infty}(-1)^n\varphi(P_{f,t}^{(1)})^n$$ since $\varphi(P_{f,t}^{(1)})\leqslant M_1|t|\|f\|e^{|t|\|f\|}$.
  Hence  the expansions of $\mathcal L_{f,t}$ and $P_{f,t}$ lead to the expansion of $\lambda_{f,t}$.
  \item The same resolvent equation is used in the calculation of $N_{f,t}$. Choose $a$ small enough such that $\|\mathcal L_{f,t}-\mathcal L\|\cdot \sup_{z\in C_2}(\|R(z,\mathcal L)\|+\|R(z,\mathcal L)\|^2)<1$ whenever $|t|\|f\|<a$, then
   \begin{align*}
    N^n_{f,t}&=\frac{1}{2\pi i}\int_{C_2}z^n R(z;\mathcal L_{f,t})dz\\
    &\overset{\eqref{eq:resolvent}}{=}\frac{1}{2\pi i}\int_{C_2} z^n R(z;\mathcal L)\sum_{m=0}^\infty((\mathcal L_{f,t}-\mathcal L)R(z;\mathcal L))^m dz\\
    &=N+ \frac{1}{2\pi i}\int_{C_2} z^n R(z;\mathcal L)\sum_{m=1}^\infty((\mathcal L_{f,t}-\mathcal L)R(z;\mathcal L))^m dz.
   \end{align*}
   Notice that $N{\bf 1}=0$, whence we have:
   \begin{align*}
    \|N^n_{f,t}{\bf 1}\|&\leqslant \frac{1}{2\pi}\|\int_{C_2} z^n R(z;\mathcal L)\sum_{m=1}^\infty((\mathcal L_{f,t}-\mathcal L)R(z;\mathcal L))^m dz\|\\
    &\leqslant \rho_2^n\frac{M_2|t|\|f\|}{1-M_2|t|\|f\|}
   \end{align*}
   for some constant $M_2$ with $a<M_2^{-1}$.
  \end{enumerate} 
 \end{proof}
  
When $f$ is fixed, since $\lambda_{f,t}$ is analytic with respect to $t$ around $0$ (Lemma \ref{lem:decomp}), the coefficients $\lambda_f^{(i)}$ in the expansion of $\lambda_{f,t}$ are just the corresponding derivatives of $\lambda_{f,t}$ with respect to $t$ at $t=0$. They can be calculated in the following manner. 
 \begin{lemma}[{\cite[Lemmes 2, 3, 6]{Rousseau-Egele1983}}]\label{lem:evexpansion}
  Let $f\in L$ with $\int_\Omega fd\mu=0$. Then 
  \begin{equation*}
   \lambda_f^{(1)}=0, \quad \lambda_f^{(2)}=-\lim_{n\to\infty}\frac{1}{n}\int_\Omega (\sum_{m=0}^{n-1} f\circ T^m)^2d\mu.
  \end{equation*}
  The limit exists, and $\lambda_f^{(2)}\neq0$ if and only if $f$ is not of the form $\varphi\circ T-\varphi$ for any $\varphi\in L$.
 \end{lemma}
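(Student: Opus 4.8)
The plan is to differentiate the leading-eigenvalue identity of $\mathcal{L}_{f,t}$ at $t=0$ to obtain $\lambda_f^{(1)}$ and $\lambda_f^{(2)}$ in closed form, and then to recognize $-\lambda_f^{(2)}$ as the asymptotic variance. Write $h_{f,t}:=P_{f,t}{\mathbf 1}$, which by Lemma~\ref{lem:decomp} satisfies $\mathcal{L}_{f,t}h_{f,t}=\lambda_{f,t}h_{f,t}$; integrating against $\mu$ and using $\int_\Omega\mathcal{L}(e^{itf}h)\,d\mu=\int_\Omega e^{itf}h\,d\mu$ gives $\int_\Omega e^{itf}h_{f,t}\,d\mu=\lambda_{f,t}\int_\Omega h_{f,t}\,d\mu$. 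Since $h_{f,0}=P{\mathbf 1}={\mathbf 1}$ by \eqref{eq:ev1}, I would insert the expansions $h_{f,t}={\mathbf 1}+tP_f^{(1)}{\mathbf 1}+\tfrac{t^2}2P_f^{(2)}{\mathbf 1}+O(t^3)$ and $\lambda_{f,t}=1+t\lambda_f^{(1)}+\tfrac{t^2}2\lambda_f^{(2)}+O(t^3)$ from Lemma~\ref{lem:expansion}, together with $e^{itf}=1+itf-\tfrac{t^2}2f^2+O(t^3)$, and compare the coefficients of $t$ and $t^2$, using $\int_\Omega f\,d\mu=0$ throughout. The $t$-coefficient collapses to $\lambda_f^{(1)}=0$; the $t^2$-coefficient gives $\lambda_f^{(2)}=-\int_\Omega f^2\,d\mu+2i\int_\Omega f\,(P_f^{(1)}{\mathbf 1})\,d\mu$.

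Next I would evaluate $P_f^{(1)}{\mathbf 1}$. From the resolvent formula in the proof of Lemma~\ref{lem:expansion}, $P_f^{(1)}=\frac1{2\pi i}\int_{C_1}R(z;\mathcal{L})\mathcal{L}_f^{(1)}R(z;\mathcal{L})\,dz$ with $\mathcal{L}_f^{(1)}(\cdot)=i\mathcal{L}(f\,\cdot)$. Using $R(z;\mathcal{L}){\mathbf 1}=(z-1)^{-1}{\mathbf 1}$ (from \eqref{eq:ev1}) and $\mathcal{L}f=Nf$ (since $\int_\Omega f\,d\mu=0$), on which $R(z;\mathcal{L})$ acts as $(zI-N)^{-1}$ because no spectrum of $N$ meets the disc bounded by $C_1$, a residue evaluation at $z=1$ yields $P_f^{(1)}{\mathbf 1}=i(I-N)^{-1}Nf=i\sum_{k\ges1}\mathcal{L}^kf=:ig$, and $g\in L$ since $\mathfrak r(N)<1$. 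Hence $\lambda_f^{(2)}=-\int_\Omega f^2\,d\mu-2\int_\Omega fg\,d\mu$. To match the stated limit I would use the defining relation of $\mathcal{L}$ in the form $\int_\Omega f\cdot\mathcal{L}^kf\,d\mu=\int_\Omega f\cdot f\circ T^k\,d\mu$ together with $T$-invariance to expand $\int_\Omega(\sum_{m=0}^{n-1}f\circ T^m)^2\,d\mu=n\int_\Omega f^2\,d\mu+2\sum_{k=1}^{n-1}(n-k)\int_\Omega f\cdot f\circ T^k\,d\mu$; dividing by $n$ and invoking exponential decay of correlations (a consequence of the spectral gap: $|\int_\Omega f\cdot\mathcal{L}^kf\,d\mu|=|\int_\Omega f\,N^kf\,d\mu|\les\|f\|_\infty\|N^k\|\,\|f\|$ decays geometrically) lets me pass to the limit termwise; the limit exists and equals $\int_\Omega f^2\,d\mu+2\sum_{k\ges1}\int_\Omega f\cdot f\circ T^k\,d\mu=-\lambda_f^{(2)}$.

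For the dichotomy, the ``if'' part is immediate: if $f=\varphi\circ T-\varphi$ with $\varphi\in L$, then $\sum_{m=0}^{n-1}f\circ T^m=\varphi\circ T^n-\varphi$ telescopes, so $\|\sum_{m=0}^{n-1}f\circ T^m\|_2\les2\|\varphi\|_2$ stays bounded and $\lambda_f^{(2)}=0$. For the converse, set $h:=f+g$; since $g=\sum_{k\ges1}\mathcal{L}^kf$, a short computation gives $\mathcal{L}h=g$, and expanding $\|h\|_2^2-\|g\|_2^2$ while using $\langle\mathcal{L}^kf,f\rangle=\langle f,f\circ T^k\rangle$ shows $\|h\|_2^2-\|\mathcal{L}h\|_2^2=-\lambda_f^{(2)}$. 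Thus $\lambda_f^{(2)}=0$ forces $\|\mathcal{L}h\|_2=\|h\|_2$; since $\mathcal{L}$ is the $L^2(\mu)$-adjoint of the isometry $u\mapsto u\circ T$, the equality case of Cauchy--Schwarz forces $h=(\mathcal{L}h)\circ T=g\circ T$, i.e.\ $f=g\circ T-g$ with $g\in L$, which exhibits $f$ as a coboundary in $L$.

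The main obstacle, I expect, is not a single hard estimate but rather careful bookkeeping: the Cauchy-integral identification $P_f^{(1)}{\mathbf 1}=ig$ and the termwise passage to the limit both rely on the quasi-compactness and spectral-gap bounds being applied with the right uniformity, while the structural heart of the argument is the equality case in the converse, which is also the point where it matters that one can take $\varphi=g$ in $L$ rather than merely in $L^2$.
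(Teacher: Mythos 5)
Your proof is correct, but it is worth noting that the paper does not actually prove this lemma: its ``proof'' is a one-line observation that the spectral setup here matches that of Rousseau-Eg\`ele, so that the proofs of Lemmes 2, 3 and 6 of that reference carry over verbatim. What you have written is a complete, self-contained derivation of what the paper outsources. Your route --- integrating the eigenvalue equation $\mathcal L_{f,t}P_{f,t}\mathbf 1=\lambda_{f,t}P_{f,t}\mathbf 1$ against $\mu$, matching coefficients of $t$ and $t^2$, and identifying $P_f^{(1)}\mathbf 1=i\sum_{k\ges 1}\mathcal L^k f$ by a residue computation on $C_1$ --- is in the same perturbative spirit as the cited source, and all the individual steps check out: the identity $\lambda_f^{(2)}=-\int_\Omega f^2d\mu-2\int_\Omega fg\,d\mu$ with $g=\sum_{k\ges1}\mathcal L^kf=(I-N)^{-1}Nf$, the absolutely summable correlations $\int_\Omega f\cdot f\circ T^k d\mu=\int_\Omega f\,N^kf\,d\mu$ giving existence of the limit, and the Ces\`aro passage to $\int_\Omega f^2d\mu+2\sum_{k\ges1}\int_\Omega f\cdot f\circ T^kd\mu$. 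The one place where your argument genuinely adds value over a bare citation is the converse of the dichotomy: the observation that $h=f+g$ satisfies $\mathcal Lh=g$ and $\|h\|_2^2-\|\mathcal Lh\|_2^2=-\lambda_f^{(2)}$, so that $\lambda_f^{(2)}=0$ forces equality in the Cauchy--Schwarz step $\|\mathcal Lh\|_2^2=\langle h,(\mathcal Lh)\circ T\rangle\les\|h\|_2\|\mathcal Lh\|_2$ and hence $h=g\circ T$, i.e.\ $f=g\circ T-g$. Crucially, $g\in L$ because the Neumann series converges in the $\|\cdot\|$-norm (spectral gap), which is exactly what the lemma's statement requires ($\varphi\in L$, not merely $\varphi\in L^2$); you correctly flag this as the structural heart of the argument. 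The only cosmetic caveat is the degenerate case $\mathcal Lh=0$ in the equality discussion, which gives $h=0$ and hence $f=0$, trivially a coboundary.
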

 \begin{proof}
 One only needs to notice that the transfer operator and its spectral decomposition in \cite{Rousseau-Egele1983} share the same spectral properties and expand in the same way as in our settings when $f$ is fixed, to which the proofs of \cite[Lemmes 2, 3, 6]{Rousseau-Egele1983} refer, hence the proofs carry over to our settings.
 \end{proof}
  The  asymptotic variance of $f$ will be denoted by $$\sigma_f^2:=\lim\limits_{n\to\infty}\frac{1}{n}\int_\Omega(\sum\limits_{m=0}^{n-1} f\circ T^m)^2d\mu.$$
   
 \section{A CLT for arrays of Birkhoff sums}
 \label{sec:clt}
As mentioned in the introduction, in this section we prove a central limit theorem based on the tools developed in Section \ref{sec:transfer} for arrays of functions in Gibbs-Markov systems. The following theorem appeared as part of the third author's PhD thesis \cite{Zhang2015}.  \begin{theorem}\label{thm:CLTbd}
  Consider a Gibbs-Markov system $(\Omega, \mu, T, \alpha)$, a sequence $\{f_n\}\subset L$ with $\int_\Omega f_n d\mu=0$ and not of the form $\varphi\circ T-\varphi$ for any $\varphi\in L$ and a sequence of positive integers $k_n\to \infty$. If $$\lim_{n\to\infty}\frac{\|f_n\|^3}{\sqrt{k_n}\sigma_n^3}=0,$$
  where $\sigma_n^2:=\sigma_{f_n}^2$ is the asymptotic variance of $f_n$, then $$ \frac{f_n+f_n\circ T+...+f_n\circ T^{k_n-1} }{\sqrt{k_n\sigma_n^2}}$$ converges in distribution to the standard normal law $\mathcal N(0,1)$. 
  \end{theorem}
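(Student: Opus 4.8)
The plan is to run the Nagaev--Guivarc'h spectral argument of Rousseau-\'Egele, now available through Lemmas~\ref{lem:decomp}--\ref{lem:evexpansion}, on the perturbed transfer operators $\mathcal L_{f_n,t_n}$ with $t_n:=t/\sqrt{k_n\sigma_n^2}$. First note that $\sigma_n^2>0$: by Lemma~\ref{lem:evexpansion}, $\sigma_n^2=-\lambda_{f_n}^{(2)}\neq 0$ precisely because $f_n$ is not a coboundary, and $\sigma_n^2\ges 0$ as a limit of variances, so the normalisation is meaningful. The starting point is the operator identity $\mathcal L_{f,t}^{m}\mathbf 1=\mathcal L^{m}\big(e^{it\sum_{j=0}^{m-1}f\circ T^{j}}\big)$, which follows by induction on $m$ from the relation $\mathcal L\big((\psi\circ T)\cdot\phi\big)=\psi\cdot\mathcal L\phi$, together with $\int_\Omega \mathcal L^{m}\phi\,d\mu=\int_\Omega\phi\,d\mu$. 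Writing $S_n:=(k_n\sigma_n^2)^{-1/2}\sum_{j=0}^{k_n-1}f_n\circ T^{j}$, this yields the closed formula
\begin{equation*}
\mathbb E_\mu\big[e^{itS_n}\big]=\int_\Omega e^{itS_n}\,d\mu=\int_\Omega \mathcal L_{f_n,t_n}^{k_n}\mathbf 1\,d\mu ,
\end{equation*}
so by L\'evy's continuity theorem it suffices to show that this quantity converges to $e^{-t^2/2}$ for each fixed $t\in\mathbb R$.

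The one quantitative fact extracted from the hypothesis is that $|t_n|^3\|f_n\|^3=|t|^3k_n^{-1}\cdot\|f_n\|^3(\sqrt{k_n}\,\sigma_n^3)^{-1}=o(k_n^{-1})$, and in particular $|t_n|\cdot\|f_n\|\to 0$. Hence for $n$ large $|t_n|\cdot\|f_n\|<a$ and Lemmas~\ref{lem:decomp} and~\ref{lem:expansion} give the decomposition
\begin{equation*}
\mathcal L_{f_n,t_n}^{k_n}\mathbf 1=\lambda_{f_n,t_n}^{k_n}\,P_{f_n,t_n}\mathbf 1+N_{f_n,t_n}^{k_n}\mathbf 1 .
\end{equation*}
The remainder is negligible: by Lemma~\ref{lem:expansion}(3), $\|N_{f_n,t_n}^{k_n}\mathbf 1\|\les \rho_2^{k_n}\,\tfrac{M|t_n|\|f_n\|}{1-M|t_n|\|f_n\|}\to 0$ since $\rho_2<1$, $k_n\to\infty$ and $|t_n|\|f_n\|\to 0$; integrating and using $\|\cdot\|_1\les\|\cdot\|$ discards the corresponding integral. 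From the expansion $P_{f_n,t_n}=P+t_nP_{f_n}^{(1)}+\tfrac{t_n^2}{2}P_{f_n}^{(2)}+P_{f_n,t_n}^{(3)}$ of Lemma~\ref{lem:expansion}(1) and the bounds $\|P_{f_n}^{(i)}\|\les M\|f_n\|^i$, $\|P_{f_n,t_n}^{(3)}\|\les M|t_n|^3\|f_n\|^3e^{3|t_n|\|f_n\|}$, we get $\|P_{f_n,t_n}\mathbf 1-\mathbf 1\|\to 0$; since $\int_\Omega P\mathbf 1\,d\mu=\int_\Omega\mathbf 1\,d\mu=1$, this gives $\int_\Omega P_{f_n,t_n}\mathbf 1\,d\mu\to 1$.

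It remains to treat the eigenvalue factor, which is the heart of the matter. By Lemma~\ref{lem:expansion}(2), $\lambda_{f_n,t_n}=1+t_n\lambda_{f_n}^{(1)}+\tfrac{t_n^2}{2}\lambda_{f_n}^{(2)}+\lambda_{f_n,t_n}^{(3)}$ with $|\lambda_{f_n,t_n}^{(3)}|\les M|t_n|^3\|f_n\|^3e^{3|t_n|\|f_n\|}$, and by Lemma~\ref{lem:evexpansion} (applicable because $\int_\Omega f_n\,d\mu=0$) we have $\lambda_{f_n}^{(1)}=0$ and $\lambda_{f_n}^{(2)}=-\sigma_n^2$. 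Using $t_n^2\sigma_n^2=t^2/k_n$ and $|t_n|^3\|f_n\|^3=o(k_n^{-1})$ (the exponential factor being bounded), this gives
\begin{equation*}
\lambda_{f_n,t_n}=1-\frac{t^2}{2k_n}+\frac{o(1)}{k_n}\qquad(n\to\infty).
\end{equation*}
Since $\lambda_{f_n,t_n}\to 1$, the principal logarithm satisfies $k_n\log\lambda_{f_n,t_n}=k_n(\lambda_{f_n,t_n}-1)+O\big(k_n(\lambda_{f_n,t_n}-1)^2\big)=-\tfrac{t^2}{2}+o(1)$, hence $\lambda_{f_n,t_n}^{k_n}\to e^{-t^2/2}$. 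Combining the three estimates, $\mathbb E_\mu[e^{itS_n}]\to e^{-t^2/2}$, and the theorem follows.

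I expect the only genuine obstacle to be controlling the cubic remainder $\lambda_{f_n,t_n}^{(3)}$ after exponentiation: one needs $k_n\cdot|\lambda_{f_n,t_n}^{(3)}|=O\big(k_n|t_n|^3\|f_n\|^3\big)=O\big(\|f_n\|^3/(\sqrt{k_n}\,\sigma_n^3)\big)\to 0$, which is exactly the hypothesis; it is also this quantity that forces $|t_n|\|f_n\|\to 0$, making the perturbative Lemmas~\ref{lem:decomp}--\ref{lem:expansion} available along the sequence $\{f_n\}$ in the first place. Since the constants $a$ and $M$ there depend only on the Gibbs--Markov system (through Lemma~\ref{lem:df}), no uniformity in $n$ beyond $|t_n|\|f_n\|\to 0$ is required.
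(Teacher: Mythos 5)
Your proposal is correct and follows essentially the same route as the paper's proof: the Nagaev--Guivarc'h spectral argument via $\int_\Omega \mathcal L_{f_n,t_n}^{k_n}\mathbf 1\,d\mu$, the decomposition of Lemma \ref{lem:decomp}, the expansions and remainder bounds of Lemmas \ref{lem:expansion} and \ref{lem:evexpansion}, and the observation that the hypothesis is exactly what controls $k_n|\lambda_{f_n,t_n}^{(3)}|$. The only (welcome) additions beyond the paper's text are the explicit justification that $\sigma_n^2>0$ and the logarithm step for $\lambda_{f_n,t_n}^{k_n}\to e^{-t^2/2}$.
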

 
 \begin{proof}
   Let $S_n=f_n+f_n\circ T+...+f_n\circ T^{k_n-1}$. It can be easily verified that
   $$\mathcal L^{k_n}_{f_n,t}{\bf 1}=\mathcal L^{k_n} e^{itS_n}.$$
   Let $t_n=\frac{t}{\sqrt{k_n}\sigma_n}$, then we have
   \begin{align*}
    \int_\Omega e^{it\frac{S_n}{\sqrt{k_n}\sigma_n}}d\mu&=\int_\Omega e^{it_nS_n}d\mu=\int_\Omega \mathcal L^{k_n} e^{it_nS_n} d\mu\\
    &=\int_\Omega \mathcal L^{k_n}_{f_n,t_n}{\bf 1} d\mu.
   \end{align*}
   Since by assumption as $n\to\infty$, $$ |t_n|\|f_n\|=|t|\frac{\|f_n\|}{\sqrt{k_n}\sigma_n}\to 0,$$
   when $n$ is large, according to Lemma \ref{lem:decomp} 
   $$ \int_\Omega e^{it\frac{S_n}{\sqrt{k_n}\sigma_n}}d\mu=\lambda^{k_n}_{f_n,t_n}\int_\Omega P_{f_n,t_n}{\bf 1}d\mu+\int_\Omega N^{k_n}_{f_n,t_n}{\bf 1}d\mu.$$
   Lemma \ref{lem:expansion} implies that
   $$ \lim_{n\to\infty}\int_\Omega P_{f_n,t_n}{\bf 1}d\mu=\int_\Omega P{\bf 1}d\mu=1$$
   and for some constant $M$, $$\left|\int N^{k_n}_{f_n,t_n}{\bf 1}d\mu\right|\leqslant \rho_2^{k_n}\frac{M|t_n|\|f_n\|}{1-M|t_n|\|f_n\|}\to 0.$$
   By Lemma \ref{lem:expansion} and Lemma \ref{lem:evexpansion}, when $n$ is large,
   \begin{align*}
    \lambda_{f_n,t_n}&=1-\frac{1}{2}\sigma_{n}^2t_n^2+ \lambda_{f_n,t_n}^{(3)}\\
    &=1-\frac{t^2}{2k_n}+ \lambda_{f_n,t_n}^{(3)},
   \end{align*}
where $|\lambda_{f_n,t_n}^{(3)}|\leqslant M|t_n|^3\|f_n\|^3 e^{3|t_n|\|f_n\|}$.
   Hence the assumption that $$k_n |t_n|^3\|f_n\|^3=t^3\frac{\|f_n\|^3}{\sqrt{k_n}\sigma_n^3}\to 0$$
   implies
   \begin{equation*}
    \lambda_{f_n,t_n}^{k_n}\to e^{-\frac{t^2}{2}}.
   \end{equation*}
   This shows $$\lim_{n\to\infty}\int_\Omega e^{it\frac{S_n}{\sqrt{k_n}\sigma_n}}d\mu=e^{-\frac{1}{2}t^2},$$ finishing the proof of the theorem. 
 \end{proof}
%
%\begin{remark}
%To extend Theorem \ref{thm:CLTbd} to probability-preserving and mixing dynamical systems different from Gibbs-Markov ones, it suffices to ensure the spectral gap property \eqref{eq:trsfdecomp} of the transfer operator on a Banach space with the norm satisfying \eqref{eq:normprod} and larger than the $L^1$-norm.
%\end{remark}

We wish to point out that the proofs of the CLT presented in this section were detailed in the context of Gibbs-Markov maps but also hold in other, more general settings of mixing dynamical systems. 
The main technique is the spectral gap property of Ionescu-Tulcea and Marinescu ~\cite{IonescuTulceaMarinescu1950} which allows for the decomposition in equation \eqref{eq:trsfdecomp}. This property holds in all generality for maps which satisfy a Doeblin-Fortet inequality as in Lemma~\ref{lem:df}. Our Lemmas~\ref{lem:decomp}, \ref{lem:expansion} and \ref{lem:evexpansion} are instrumental in proving the CLT of Theorem~\ref{thm:CLTbd} and so this CLT holds in \emph{any} setting in which the above mentioned Lemmas are valid.

Note that identifying the context of a Banach space of functions along with a pair of norms satisfying our assumptions is a delicate but necessary task, without which the theorem lacks relevant examples, and we refrain from formulating our theorems in  an abstract albeit empty context. Other known examples of general settings to which Theorem~\ref{thm:CLTbd} applies, beyond the Gibbs-Markov systems presented in Section~\ref{sec:GibbsMarkov}, include maps of the interval endowed with the bounded variation norm, as well as Young towers endowed with the H\"older norm. For illustrative purposes we work out an example.

\begin{corollary}\label{cor:lesreg}
Let $(\Omega, \mu, T, \alpha)$ be a Gibbs-Markov map. Then every function
$$ f=\sum_{n=1}^\infty \gamma_n g_n$$
with $g_n\in L$, $\gamma_n\in\mathbb R$, $\int_\Omega g_n d\mu=0$, $\sup_{n\in\mathbb N} \|g_n\|_2<\infty$, $\sup_{n\in \mathbb N}|\gamma_n|\|g_n\|<\infty$ and $\sum_{k=n+1}^\infty |\gamma_k|\leqslant K n^{-3-\eta}$ (for some constants $K, \eta>0$) satisfies the central limit theorem in the form
$$ \frac 1{\sqrt{n}\sigma_n} \sum_{j=0}^{n-1} f\circ T^j \Rightarrow \mathcal N(0,1)$$
for some sequence $\sigma_n>0$, provided the asymptotic variances of $\sum_{k=1}^{n}\gamma_kg_k$ are bounded away from $0$ uniformly.
\end{corollary}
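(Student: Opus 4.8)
The plan is to approximate $f$ by its truncations $f^{(N)}:=\sum_{k=1}^{N}\gamma_k g_k$, which all belong to $L$, apply Theorem~\ref{thm:CLTbd} to the sequence $f_n:=f^{(N(n))}$ for a slowly growing $N(n)$ and with $k_n:=n$, and dispose of the tail $f-f^{(N(n))}$ by a Slutsky-type argument. As a preliminary observation, taking $n=1$ in the hypothesis gives $\sum_{k\geq 2}|\gamma_k|\leq K$, so $\sum_k|\gamma_k|<\infty$; combined with $C_2:=\sup_k\|g_k\|_2<\infty$ this shows $f\in L^2$ and that each tail $\sum_{k>N}\gamma_k g_k$ converges absolutely in $L^2$. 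Writing $B:=\sup_k|\gamma_k|\,\|g_k\|<\infty$ and noting $|\gamma_k|\leq\sum_{j\geq k}|\gamma_j|\leq K(k-1)^{-3-\eta}$ for $k\geq 2$, we have $\|g_k\|\leq B/|\gamma_k|$ and $|\gamma_k|=O(k^{-3-\eta})$.

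The core step is a bound on the $L^2$-norm of the Birkhoff sum of the tail that is \emph{uniform in $n$}. For $g\in L$ with $\int_\Omega g\,d\mu=0$ the spectral gap \eqref{eq:trsfdecomp} gives $\mathcal L^m g=N^m g$, hence $|\int_\Omega g\cdot g\circ T^m\,d\mu|=|\int_\Omega N^m g\cdot g\,d\mu|\leq\|N^m\|\,\|g\|\,\|g\|_2$; since $\mathfrak r(N)<1$ the series $\sum_m\|N^m\|$ converges, and expanding the square and using $T$-invariance yields a system constant $C_1$ with $\|\sum_{j=0}^{n-1}g\circ T^j\|_2^2\leq C_1 n(\|g\|_2^2+\|g\|\,\|g\|_2)$. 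Applying this to each $g_k$, using $\|g_k\|_2\leq C_2$ and $\|g_k\|\leq B/|\gamma_k|$, and summing by the $L^2$ triangle inequality, one obtains
$$\frac{1}{\sqrt n}\Big\|\sum_{j=0}^{n-1}(f-f^{(N)})\circ T^j\Big\|_2\ \leq\ C\Big(\sum_{k>N}|\gamma_k|+\sum_{k>N}|\gamma_k|^{1/2}\Big)\ \leq\ C'\,N^{-(1+\eta)/2},$$
the last bound because $|\gamma_k|=O(k^{-3-\eta})$ makes both series tails $O(N^{-(1+\eta)/2})$. The point is that neither $\sum_k|\gamma_k|\,\|g_k\|$ nor $\sum_k\|g_k\|$ need converge, so one trades the possible growth of $\|g_k\|$ against the uniform bound on $\|g_k\|_2$, which is precisely what produces the summable exponent $|\gamma_k|^{1/2}$.

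Finally, pick any $N(n)\to\infty$ with $N(n)=o(n^{1/6})$, say $N(n)=\lfloor n^{1/7}\rfloor$, and set $f_n:=f^{(N(n))}$ and $\sigma_n:=\sigma_{f^{(N(n))}}>0$ (positive by the uniform lower bound $\inf_N\sigma_{f^{(N)}}^2=:c^2>0$ from the hypothesis). By Lemma~\ref{lem:evexpansion}, $\lambda_{f_n}^{(2)}=-\sigma_n^2\neq 0$, so $f_n$ is not of the form $\varphi\circ T-\varphi$ with $\varphi\in L$; and $\|f_n\|\leq\sum_{k\leq N(n)}|\gamma_k|\,\|g_k\|\leq N(n)B$, so $\|f_n\|^3/(\sqrt n\,\sigma_n^3)\leq N(n)^3 B^3/(\sqrt n\,c^3)\to 0$. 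Theorem~\ref{thm:CLTbd} (with $k_n=n$) thus gives $\frac{1}{\sqrt n\,\sigma_n}\sum_{j=0}^{n-1}f^{(N(n))}\circ T^j\Rightarrow\mathcal N(0,1)$. By the previous display and $\sigma_n\geq c$, the remainder $\frac{1}{\sqrt n\,\sigma_n}\sum_{j=0}^{n-1}(f-f^{(N(n))})\circ T^j$ has $L^2$-norm at most $(C'/c)N(n)^{-(1+\eta)/2}\to 0$, hence tends to $0$ in probability; adding the two pieces and invoking Slutsky's lemma (or, directly, $|\mathbb E[e^{itX_n}(e^{itY_n}-1)]|\leq|t|\,\|Y_n\|_2$) gives $\frac{1}{\sqrt n\,\sigma_n}\sum_{j=0}^{n-1}f\circ T^j\Rightarrow\mathcal N(0,1)$, which is the assertion.

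The main obstacle is exactly that the limit function $f$, and the tails $f-f^{(N)}$, need not lie in $L$ — that is the whole reason the corollary concerns functions of lower regularity — so the transfer-operator and perturbation machinery underlying Theorem~\ref{thm:CLTbd} cannot be applied to them directly. One is therefore forced to split the tail into the individual $g_k\in L$, control each Birkhoff sum by decay of correlations, and then balance the two growth rates through the hypothesis on the $\gamma_k$; the polynomial bound with exponent $3+\eta$ is comfortably more than what this balancing requires, the remaining slack being harmless.
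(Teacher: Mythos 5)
Your proof is correct, and the skeleton is the same as the paper's: truncate to $f^{(N(n))}\in L$, apply Theorem~\ref{thm:CLTbd} with $k_n=n$, and kill the tail Birkhoff sum in $L^2$. The difference lies in how the tail is estimated. The paper simply uses the triangle inequality $\bigl\|\sum_{j=0}^{n-1}h\circ T^j\bigr\|_2\les n\|h\|_2$ together with $\sup_k\|g_k\|_2<\infty$, which yields $n^{-1}\bigl\|\sum_{j=0}^{n-1}(f-f^{(l_n)})\circ T^j\bigr\|_2^2\les nM^2\bigl(\sum_{k>l_n}|\gamma_k|\bigr)^2=O(n\,l_n^{-6-2\eta})$; this forces the truncation level into the window $n^{1/(6+2\eta)}\ll l_n\ll n^{1/6}$ and is exactly where the exponent $3+\eta$ is consumed. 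You instead invoke the spectral gap to get decay of correlations, $\bigl|\int g\cdot g\circ T^m\,d\mu\bigr|\les\|N^m\|\,\|g\|\,\|g\|_2$, hence $\bigl\|\sum_{j=0}^{n-1}g_k\circ T^j\bigr\|_2=O\bigl(\sqrt{n}\,(\|g_k\|_2+\sqrt{\|g_k\|\,\|g_k\|_2})\bigr)$, and then trade $\|g_k\|\les B/|\gamma_k|$ against the uniform $L^2$ bound to obtain a tail estimate $O\bigl(\sum_{k>N}|\gamma_k|^{1/2}\bigr)$ that is uniform in $n$. This buys you more: any $N(n)\to\infty$ with $N(n)=o(n^{1/6})$ works, and your argument only needs $\sum_k|\gamma_k|^{1/2}<\infty$ (i.e.\ tail decay of order $n^{-2-\eta}$ would already suffice), so the hypothesis $3+\eta$ has slack in your version that it does not have in the paper's. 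The remaining ingredients --- the uniform lower bound $\inf_N\sigma_{f^{(N)}}>0$ to rule out coboundaries via Lemma~\ref{lem:evexpansion} and to control $\|f_n\|^3/(\sqrt{n}\sigma_n^3)$, and the Slutsky step --- match the paper's proof.
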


\begin{remark} If the asymptotic variances in the previous corollary converge to zero, then the ergodic sums normalized by $\sqrt n$ converge to $0$ stochastically.
\end{remark}

\begin{proof} Let $l_n\in \mathbb N$ satisfy
$$ \lim_{n\to\infty } n^{-1}l_n^6=0 \quad \mbox{and}\quad \lim_{n\to\infty} l_n^{-6-2\eta} n=0.$$

Define
$$ f_n=\sum_{j=1}^{l_n} \gamma_j g_j$$
and denote by $\sigma_n^2$ the asymptotic variance of $f_n$.
Note that $f_n\in L$ with $\|f_n\|\leqslant \sum_{k=1}^{l_n} |\gamma_k|\|g_k\|\leqslant C l_n$ for some constant $C>0$ and hence (since $\inf \sigma_n>0$)
$$ \frac{\|f_n\|^3}{\sqrt{n} \sigma_n^3} = O\left(\frac{l_n^3}{\sqrt{n}}\right)\to 0.$$

Take $k_n=n$ in Theorem \ref{thm:CLTbd} to deduce that
$$ \frac{f_n+f_n\circ T+...+f_n\circ T^{n-1}}{\sqrt{n}\sigma_{n}}$$
converges to the standard normal distribution.

Now by Chebychev's inequality with $M=\sup_{n\in \mathbb N} \|g_n\|_{2}$, for any $\epsilon>0$
\begin{eqnarray*} &&\mu\left(\left\{x\in \Omega: \left|\sum_{j=0}^{n-1} \sum_{k=l_n+1}^\infty \gamma_kg_k(T^j(x))\right|\geqslant \epsilon \sqrt{n}\sigma_n\right\}\right)\\
&\leqslant& \frac 1{\epsilon^2 n\sigma_n^2} \int_\Omega \left(\sum_{j=0}^{n-1} \sum_{k=l_n+1}^\infty  \gamma_k g_k\circ T^j \right)^2 d\mu\\
&\leqslant& \frac{1}{\epsilon^2 n\sigma_n^2} n^2M^2\left(\sum_{k=l_n+1}^\infty |\gamma_k|\right)^2\\
&\leqslant & O(n(l_n^{-3-\eta})^2),
\end{eqnarray*}
which converges to $0$. It follows that $\frac{1}{\sqrt{n}\sigma_n} \sum_{j=0}^{n-1} f_n\circ T^j$ and $\frac{1}{\sqrt{n}\sigma_n} \sum_{j=0}^{n-1} f\circ T^j$ have the same limiting distribution, whence the corollary.
\end{proof}

\begin{example}\label{ex:lesreg} Let $(\Omega,\mathcal B,\mu, T, \alpha)$ denote the continued fraction transformation with $\Omega=(0,1)$ and $\mu$ the Gauss measure. For every irrational $x\in(0,1)$, denote by $(x_n)_{n\in\mathbb N}$ its continued fraction expansion. Let $a_n:=\{x:x_0=n\}$ for every $n\in\mathbb N$, the partition $\alpha=\{a_n:n\in\mathbb N\}$. Let $\eta\in(0,\frac12)$, define
$$m_n:=\lfloor -\log_r n^{2}\rfloor, \quad \ell_n:=r^{-m_n},$$
$$\quad \gamma_n:= r^{(2+\eta)m_n}, \quad  g_n:=\ell_n{\bf 1}_{a_{\lfloor \ell_n\rfloor}}\circ T^{m_n}-\ell_n\mu(a_{\lfloor \ell_n\rfloor}).$$ Here we denote by $\lfloor\cdot \rfloor$ and $\lceil\cdot\rceil$ the usual floor and ceiling functions for real numbers.  Recall that $r\in (0,1)$ is the constant in \eqref{eq:distort}. In the current case, we can take $r=2/3$ (cf. \cite[Example 2]{AaronsonDenker2001}, noting $|T^2|'\geqslant 9/4$). It is easy to see that $g_n\in L$ and $\int_\Omega g_n d\mu=0$. 
$$\|g_n\|_\infty=\ell_n(1-\mu(a_{\lfloor \ell_n\rfloor})), \quad D_\Omega g_n=\ell_n r^{-m_n}=r^{-2m_n}\asymp n^4,$$
$$|\gamma_n|\|g_n\|=O(1)r^{\eta m_n}=O(1)n^{-2\eta},$$
$$\|g_n\|_2=\ell_n\sqrt{\mu(a_{\lfloor \ell_n\rfloor})(1-\mu(a_{\lfloor\ell_n\rfloor}))}=O(1)\ell_n /\sqrt{(\ell_n(\ell_n+1))},$$
\begin{multline*}\sum_{k=n}^\infty|\gamma_k|\leqslant\sum_{d=m_n}^\infty \sum_{\lceil r^{-d/2}\rceil\leqslant k\leqslant \lfloor r^{-(d+1)/2}\rfloor} r^{(2+\eta)d}\\ \leqslant \sum_{d=m_n}^\infty r^{(2+\eta)d}(r^{-(d+1)/2}-r^{-d/2})
\leqslant O(n^{-3-2\eta}).\end{multline*}
It follows that $f=\sum_{j=1}^\infty \gamma_j g_j$ satisfies the assumptions in the corollary, hence the central limit theorem holds:
$$ \frac 1{\sqrt{n}\sigma_n}\sum_{j=0}^{n-1} f\circ T^j \Rightarrow \mathcal N(0,1).$$
We remark that $f\in L^2(\mu)$ but $f\not\in L$. In fact, for large $n$, 
\begin{multline*}D_\alpha f\geqslant D_{\bigvee_{i=0}^{m_n-1}T^{-i}\alpha} f\geqslant r^{-m_n}\sum_{d\geqslant m_n} \sum_{\lceil r^{-d/2}\rceil\leqslant k< \lfloor r^{-(d+1)/2}\rfloor}r^{(2+\eta)d}\cdot r^{-d}\\
=r^{-m_n}O(r^{(\frac12+\eta)m_n})=O(r^{(-\frac12+\eta)m_n})=O(n^{1-2\eta}),\end{multline*}
hence $D_\alpha f$ is infinite. This calculation also indicates that for any $a\in\bigvee_{i=0}^{n-1}T^{-i}\alpha$, $D_a f$ is infinite.
\end{example}

 \section{A CLT for dynamical arrays after Lindeberg}
 \label{sec:lindeberg}
We prove a CLT for dynamical arrays, and later apply it to Birkhoff sums. The notion of dynamical array is also considered in \cite{DenkerSentiZhang2017}.

\begin{definition}
A dynamical array is a sequence $\{(F_{n,i},\tau_{n,i}): i=1,...,k_n\}_{n\in \mathbb N}$ consisting of a family of real valued functions $F_{n,i}$ defined on a dynamical system $(\Omega, T)$ and a family of initial times $\tau_{n,i}\in\mathbb N$, where $F_{n,i}$ is of form
$$ F_{n,i} =\sum_{j=1}^{l_{n,i}} f_{n,i,j}\circ T^{j-1},\qquad i=1,...,k_n$$ with the $f_{n,i,j}:\Omega\to \mathbb R$ and $l_{n,i}\in\mathbb N$
and where $\tau_{n,i}$ satisfies $\tau_{n,i-1}+ l_{n, i-1}\leqslant \tau_{n,i}$ for all $i=2,\ldots, k_n$. 
\end{definition}
Such a dynamical array brings about a sequence of sums $$F_{n,1}\circ T^{\tau_{n,1}}+F_{n,2}\circ T^{\tau_{n,2}}+\cdots+F_{n,k_n}\circ T^{\tau_{n,k_n}}\quad n\in\mathbb N.$$ Define $$m_n:=\inf_{2\leqslant i\leqslant k_n} \tau_{n,i}-\tau_{n,i-1}-l_{n,i-1}$$ as the minimal spacing (of the $n$-th row) of a dynamical array. 

We recall some notations. Let $(\Omega, \mu, T,\alpha)$ be a mixing Gibbs-Markov system. $\beta$ is a partition of $\Omega$ satisfying \eqref{eq:partbeta}, $\sigma(\beta)=\sigma(T\alpha)$. A real number $r$ inducing a metric on $\Omega$ is given in Definition \ref{def:gibbsmarkov}. The transfer operator $\mathcal L$ has the decomposition \eqref{eq:trsfdecomp} on $L=L_\beta^\infty$, i.e 
\begin{equation}\label{eq:trsfdecomp2}
\mathcal{L}f=\int_\Omega f d\mu+Nf\end{equation} for all $f\in L$. Let $\rho:=\frak{r}(N)\in (0,1).$
\begin{remark}
It is known (\cite[Corollaire 1]{Hennion1993}) that the essential spectrum of $\mathcal L$ is at most $r$ due to Lemma \ref{lem:df}, but the relation between $\rho$ and $r$ is unclear. \end{remark}

\begin{theorem}\label{thm:CLTdarray} Let $\{(F_{n,i},\tau_{n,i}): i=1,...,k_n\}_{n\in \mathbb N}$ be a dynamical array defined on a mixing Gibbs-Markov system $(\Omega, \mu, T,\alpha)$ with $F_{n,i} =\sum_{j=1}^{l_{n,i}} f_{n,i,j}\circ T^{j-1}$ and minimal spacing $m_n$. Suppose every $f_{n,i,j}\in L$ is centered, i.e. $\int_\Omega f_{n,i,j} d\mu=0$. Let $$\check{s}_n^2:=\Var(F_{n,1}\circ T^{\tau_{n,1}}+F_{n,2}\circ T^{\tau_{n,2}}+\cdots+ F_{n,k_n}\circ T^{\tau_{n,k_n}}).$$  Assume the following properties for this array.
\begin{enumerate}
\item For every $n\in\mathbb N$, $$\check{s}_n> 0.$$
\item 
\begin{equation}\label{eq:smallkdarray}
m_n>0 \quad { and }\quad \limsup_{n\to\infty} k_n^2 \rho^{m_n}<\infty.
\end{equation}
\item \begin{equation}\label{eq:condextradarray}
\limsup_{n\to\infty}\rho^{m_n}\sum_{1\leqslant i \leqslant k_n}r^{l_{n,i}}\frac{\|F_{n,i}\|}{\check{s}_n}<\infty.
\end{equation}
\item The Lindeberg condition holds, i.e. for every $\epsilon>0$ \begin{equation}\label{eq:Lindebergdarray}L_{n,\epsilon}:=\frac1{\check{s}_n^2} \sum_{i=1}^{k_n}\int_\Omega F_{n,i}^2\cdot {\bf 1}_{\{|F_{n,i}|\geqslant\epsilon\check{s}_n\}} d\mu\xrightarrow{n\to\infty} 0.\end{equation}
\end{enumerate}
Then, this array satisfies a CLT, i.e.
$$\frac{F_{n,1}\circ T^{\tau_{n,1}}+F_{n,2}\circ T^{\tau_{n,2}}+\cdots+ F_{n,k_n}\circ T^{\tau_{n,k_n}}}{\check{s}_n}\Rightarrow \mathcal N(0,1).$$
\end{theorem}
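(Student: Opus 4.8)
The plan is to combine the Lindeberg method for a triangular array of (nearly) independent blocks with the transfer-operator/characteristic-function-operator machinery, using the spatial separation encoded in $m_n$ to guarantee the near-independence. First I would introduce the normalized block summands $G_{n,i} := (F_{n,i}\circ T^{\tau_{n,i}})/\check s_n$ and write $W_n = \sum_{i=1}^{k_n} G_{n,i}$; the goal is $\mathbb E\,e^{itW_n}\to e^{-t^2/2}$ for every fixed $t$. For each $i$, set $t_{n,i} := t\,\|F_{n,i}\|/\check s_n$ (so that applying Lemma~\ref{lem:df}-type estimates to $F_{n,i}$ at frequency $t/\check s_n$ corresponds to $|t_{n,i}|\cdot\|f_{n,i,j}\|$-type control); the key point is that the Lindeberg condition \eqref{eq:Lindebergdarray} forces $\max_i \mathbb E\,G_{n,i}^2\to 0$ and hence, together with $\sum_i \mathbb E\, G_{n,i}^2 = 1 + o(1)$ (the cross terms being negligible, see below), forces $\max_i \|F_{n,i}\|/\check s_n$ to be small in the relevant averaged sense — so the perturbative Lemmas~\ref{lem:decomp} and \ref{lem:expansion} apply blockwise.

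Next I would carry out the block decoupling. The characteristic function of $W_n$ can be written as an integral $\int_\Omega \mathcal L^{?}(e^{i(t/\check s_n)F_{n,k_n}}\cdots)\,d\mu$ by peeling off the blocks from the innermost ($i=k_n$) outward, using $\int \mathcal L h\, g = \int h\, g\circ T$ repeatedly and inserting the free iterates $T$ between consecutive blocks. Because consecutive blocks are separated by at least $m_n$ free steps, between the characteristic operator $\mathcal L_{F_{n,i},t/\check s_n}$ (really its $l_{n,i}$-fold composition along the block) and the next one there sits a factor $\mathcal L^{m_n}$; using \eqref{eq:trsfdecomp2}, $\mathcal L^{m_n} = P + N^{m_n}$ with $\|N^{m_n}\| \lesssim \rho^{m_n}$. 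Iterating this along all $k_n$ blocks produces a main term $\prod_{i=1}^{k_n}\lambda_{F_{n,i}, t/\check s_n}$ (from always picking the projection $P$, which collapses to the scalar eigenvalue of each block operator via Lemma~\ref{lem:decomp}) plus a sum of error terms, each containing at least one factor of $N^{m_n}$. The hypothesis \eqref{eq:smallkdarray}, $\limsup k_n^2\rho^{m_n}<\infty$, together with \eqref{eq:condextradarray} controlling $\rho^{m_n}\sum_i r^{l_{n,i}}\|F_{n,i}\|/\check s_n$ (which is exactly what arises from the $r^{l_{n,i}} D_\beta$ part of the Lemma~\ref{lem:df} bound applied along a block of length $l_{n,i}$), is precisely tuned so that this error sum vanishes; this combinatorial bookkeeping — keeping track of which blocks contribute $\lambda$'s, which contribute $N^{m_n}$ error factors, and bounding the resulting $2^{k_n}$-term sum by something like $k_n \cdot (\text{const})\cdot \rho^{m_n}\cdot(1+\sum_i \dots)$ — is where I expect the real work to be, and it is the main obstacle.

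For the main term, I would use Lemma~\ref{lem:expansion}(2) and the analogue of Lemma~\ref{lem:evexpansion}: since $\int F_{n,i}\,d\mu = 0$, the first-order coefficient vanishes and $\lambda_{F_{n,i}, t/\check s_n} = 1 - \tfrac{t^2}{2}\,\frac{\Var(F_{n,i}\circ T^{\tau_{n,i}})}{\check s_n^2}(1+o(1)) + \text{(cubic remainder)}$, where the second-order term is $-\tfrac{t^2}{2}\sigma^2_{F_{n,i}}/\check s_n^2$ and $\sigma^2_{F_{n,i}}$ here means the variance of the finite block sum $F_{n,i}$ under $\mu$ (no limit needed, since the block has fixed finite length once $n$ is fixed — one checks $\mathbb E(F_{n,i}\circ T^{\tau_{n,i}})^2 = \mathbb E F_{n,i}^2$ by invariance). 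Then $\log\prod_i \lambda_{F_{n,i},t/\check s_n} = -\tfrac{t^2}{2}\sum_i \frac{\mathbb E F_{n,i}^2}{\check s_n^2} + o(1) = -\tfrac{t^2}{2} + o(1)$, where the crucial step $\sum_i \mathbb E F_{n,i}^2 = \check s_n^2(1+o(1))$ follows because the cross-covariances $\mathrm{Cov}(F_{n,i}\circ T^{\tau_{n,i}}, F_{n,j}\circ T^{\tau_{n,j}})$ for $i<j$ are bounded by a correlation-decay estimate (again $\rho^{m_n}$-type, from \eqref{eq:trsfdecomp2}) times $\|F_{n,i}\|\,\|F_{n,j}\|$, summable to $o(\check s_n^2)$ under \eqref{eq:smallkdarray}–\eqref{eq:condextradarray}; the cubic remainders $\sum_i |\lambda^{(3)}|$ are controlled by $\max_i \|F_{n,i}\|/\check s_n \cdot \sum_i \mathbb E F_{n,i}^2/\check s_n^2$, and the Lindeberg condition gives $\max_i \|F_{n,i}\|/\check s_n \to 0$ — more precisely Lindeberg $\Rightarrow \max_i \mathbb E G_{n,i}^2 \to 0$, and one upgrades this to control of the sup-norm ratio using that each $F_{n,i}\in L$ so $\|F_{n,i}\|_2$ and $\|F_{n,i}\|$ are comparable along the relevant scale (this is the one place where the Banach-space structure, not just $L^2$, is used, and it deserves care). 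Assembling the vanishing error sum with the convergent main term yields $\mathbb E\,e^{itW_n}\to e^{-t^2/2}$, and Lévy's continuity theorem finishes the proof.
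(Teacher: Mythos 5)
Your plan follows the characteristic--operator route of Theorem \ref{thm:CLTbd} (perturbed transfer operators, leading eigenvalues per block), whereas the paper proves Theorem \ref{thm:CLTdarray} by the genuinely different Lindeberg swapping method: it introduces independent Gaussians $X_{n,i}$ with $\Var X_{n,i}=\Var F_{n,i}$, replaces one block at a time inside a $C_c^\infty$ test function $h$, and controls the first- and second-order Taylor terms with the crude bounds $\|N^{\tau_{n,j}-\tau_{n,i}}f\|\les C\rho_n\|f\|_1+C\rho_n r^{\cdots}D_\beta f$ applied to conditional expectations of $h'$ and $h''$, and the remainder with the Lindeberg condition. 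That choice is not cosmetic, and your route has a gap I do not see how to close under the stated hypotheses. The spectral perturbation machinery (Lemmas \ref{lem:decomp} and \ref{lem:expansion}) requires the perturbation parameter $|t|\cdot\|F_{n,i}\|/\check s_n$ to be small \emph{in the Banach norm} $\|\cdot\|=\|\cdot\|_{\infty,\beta}$. You propose to get this from the Lindeberg condition \eqref{eq:Lindebergdarray}, which indeed gives $\max_i\|F_{n,i}\|_2/\check s_n\to 0$, and then to ``upgrade'' to the $\|\cdot\|$-norm because $F_{n,i}\in L$. That upgrade is false: membership in $L$ gives no comparison of $\|\cdot\|$ by $\|\cdot\|_2$, and the only hypothesis touching $\|F_{n,i}\|/\check s_n$ is \eqref{eq:condextradarray}, where it appears damped by the factors $\rho^{m_n}r^{l_{n,i}}$; consistently with \eqref{eq:smallkdarray}--\eqref{eq:condextradarray} one can have $\|F_{n,i}\|/\check s_n$ of order $r^{-l_{n,i}}$ (this is exactly what happens for Birkhoff blocks, cf.\ \eqref{eq:cmpgrow}), so the blockwise eigenvalue expansion is simply not available.

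Two further points would need real work even if the smallness issue were resolved. First, the ``eigenvalue of a block'' $\lambda_{F_{n,i},t/\check s_n}$ is not defined by the paper's lemmas: the operator attached to block $i$ is the composition $\mathcal L_{f_{n,i,l_{n,i}},t/\check s_n}\cdots\mathcal L_{f_{n,i,1},t/\check s_n}$ of \emph{distinct} perturbed operators, and the product over $i$ involves different operators in every factor; single-operator perturbation theory (Lemma \ref{lem:decomp}) does not produce a leading scalar for such a product, and Lemma \ref{lem:evexpansion} computes $\lambda_f^{(2)}$ as the \emph{asymptotic} variance $\sigma_f^2$, not the finite-block variance $\int F_{n,i}^2\,d\mu$ that your main term requires. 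Second, the combinatorial error sum you describe (choosing $P$ or $N^{m_n}$ between blocks) needs uniform control of the non-leading factors in the $\|\cdot\|$-norm, which again runs into the unavailable bound on $\|F_{n,i}\|/\check s_n$. The paper's swapping argument avoids all of this because the only quantities it needs in the strong norm are $D_\beta$ of the partial sums $F_{n,1}\circ T^{\tau_{n,1}}+\cdots+F_{n,i-1}\circ T^{\tau_{n,i-1}}$ sitting inside $h'$ and $h''$, and those appear multiplied by exactly the damping factors that conditions \eqref{eq:smallkdarray} and \eqref{eq:condextradarray} are designed to kill; everything else is estimated in $L^1$ or $L^2$, where the Lindeberg condition suffices. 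I would recommend rebuilding your proof around that replacement scheme.
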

\begin{remark}
If $k_n\xrightarrow{n\to\infty}\infty$, condition \eqref{eq:smallkdarray} implies that $m_n\xrightarrow{n\to\infty}\infty$. 
\end{remark}
\begin{remark}
It will become clear in the proof that we can replace condition 3. by condition 3'.
$$\lim_{n\to\infty}\rho^{m_n}\sum_{1\leqslant i \leqslant k_n}r^{l_{n,i}}\frac{\|F_{n,i}\|}{\check{s}_n}\sup_{1\leqslant i \leqslant k_n}\frac{\|F_{n,i}\|_1}{\check{s}_n}=0$$
which will be handy to check in Section \ref{sec:stat}.
\end{remark}

\begin{lemma}
There exists a constant $C$ independent of $n$ such that with
$$\rho_n:=\rho^{m_n} \quad r_n:=r^{m_n},$$
for every $f\in L$, $n\in \mathbb N$ and $1\leqslant i<j\leqslant k_n$, if $\tau_{n,i+1}-\tau_{n,i}>l_{n,i}$ then
\begin{equation}\|N^{\tau_{n,j}-\tau_{n,i}}f\|\leqslant C\rho_n\|f\|_1+C\rho_n r_n^{j-i-1} r^{l_{n,i}+\cdots+l_{n,j-1}}  D_{\beta} f;\label{eq:estNdiff}\end{equation}
and if $\tau_{n,j}-\tau_{n,j-1}>l_{n,j-1}$ then
\begin{equation}\|N^{\tau_{n,j}}f\|\leqslant C\rho_n\|f\|_1+C\rho_n r^{\tau_{n,j-1}+l_{n,j-1}}  D_{\beta} f.\label{eq:estN}\end{equation}
\end{lemma}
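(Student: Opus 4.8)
The plan is to estimate each power $N^m f$ (with $m=\tau_{n,j}-\tau_{n,i}$ for \eqref{eq:estNdiff} and $m=\tau_{n,j}$ for \eqref{eq:estN}) by \emph{splitting it into two stages}, $N^m f=N^{m-p}\bigl(N^p f\bigr)$, for a carefully chosen exponent $p\in\{1,\dots,m\}$. For the inner factor I would invoke the Doeblin--Fortet inequality \eqref{eq:df} with $t=0$, namely $\|\mathcal L^p f\|\les M\bigl(r^pD_\beta f+\|f\|_1\bigr)$, together with the identity $N^p=\mathcal L^p-P$ (valid because $P^2=P$ and $PN=NP=0$, so $\mathcal L^p=(P+N)^p=P+N^p$); since $Pf=\bigl(\int_\Omega f\,d\mu\bigr)\mathbf 1$ and $\|\mathbf 1\|=1$, this yields
$$\|N^p f\|\les\|\mathcal L^p f\|+\Bigl|\int_\Omega f\,d\mu\Bigr|\les Mr^pD_\beta f+(M+1)\|f\|_1\qquad(p\ges1).$$
For the outer factor I would use the spectral gap \eqref{eq:trsfdecomp2}: since $\mathfrak r(N)=\rho<1$ there is $C_0\ges1$ with $\|N^k\|\les C_0\rho^k$ for all $k\ges0$, whence $\|N^m f\|\les C_0\rho^{m-p}\bigl(Mr^pD_\beta f+(M+1)\|f\|_1\bigr)$. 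The whole point of the split is to manufacture the extra factor $\rho^{m-p}$ that becomes the uniform $\rho_n=\rho^{m_n}$ of the statement; a one-shot bound $\|\mathcal L^m f\|\les M(r^mD_\beta f+\|f\|_1)$ produces no such factor.

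It then remains to pick $p$ so that simultaneously $p\ges1$, $m-p\ges m_n$ (so that $\rho^{m-p}\les\rho_n$), and $r^p$ equals the prescribed exponent. For \eqref{eq:estNdiff}, write $m=\tau_{n,j}-\tau_{n,i}=\sum_{k=i}^{j-1}(\tau_{n,k+1}-\tau_{n,k})$; by the definition of the minimal spacing one has $\tau_{n,k+1}-\tau_{n,k}-l_{n,k}\ges m_n$ for every $i\les k\les j-1$, hence $m\ges(j-i)m_n+\sum_{k=i}^{j-1}l_{n,k}$. Choosing $p:=(j-i-1)m_n+\sum_{k=i}^{j-1}l_{n,k}$ gives $p\ges l_{n,i}\ges1$, $m-p\ges m_n$, and $r^p=r_n^{\,j-i-1}\,r^{\,l_{n,i}+\cdots+l_{n,j-1}}$, which is precisely the exponent in \eqref{eq:estNdiff}. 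For \eqref{eq:estN} I would instead take $p:=\tau_{n,j-1}+l_{n,j-1}\ges1$; then $m-p=\tau_{n,j}-\tau_{n,j-1}-l_{n,j-1}\ges m_n$ (again by the definition of $m_n$, with the hypothesis $\tau_{n,j}-\tau_{n,j-1}>l_{n,j-1}$ ensuring the relevant gap is genuine), and $r^p=r^{\,\tau_{n,j-1}+l_{n,j-1}}$ as required.

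Putting the pieces together, $\|N^m f\|\les C_0\rho^{m-p}\bigl(Mr^pD_\beta f+(M+1)\|f\|_1\bigr)\les C_0(M+1)\,\rho_n\bigl(r^pD_\beta f+\|f\|_1\bigr)$, and substituting the two choices of $p$ yields \eqref{eq:estNdiff} and \eqref{eq:estN} with $C:=C_0(M+1)$. I do not anticipate a real obstacle: this is standard quasi-compactness bookkeeping, and the only point needing genuine care is the combinatorial choice of the split exponent $p$, which is where the dynamical-array spacing $\tau_{n,k+1}-\tau_{n,k}\ges l_{n,k}+m_n$ enters. The one technical nicety is the uniform geometric bound $\|N^k\|\les C_0\rho^k$; it follows at once from \eqref{eq:trsfdecomp2}, if need be after enlarging $\rho$ by an arbitrarily small amount within $(0,1)$, which affects nothing else in the statement.
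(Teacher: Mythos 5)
Your argument is correct and is essentially the paper's own proof: both split the power of $N$ as a short factor of length at least $m_n$ (controlled by the spectral gap, giving the $\rho_n$) times a long factor controlled by the Doeblin--Fortet inequality \eqref{eq:df} with $t=0$, using $\mathcal L=P+N$, $PN=NP=0$; the paper simply writes the long factor directly as $\mathcal L^{\tau_{n,j}-\tau_{n,i+1}+l_{n,i}}$ via $N^{a}\mathcal L^{b}=N^{a+b}$ instead of your $N^{p}=\mathcal L^{p}-P$, which is only a cosmetic difference. Your caveat about replacing $\mathfrak r(N)$ by a slightly larger $\rho<1$ to get $\|N^k\|\les C_0\rho^k$ is a fair point, but the paper's proof implicitly makes the same assumption, so it is not a divergence.
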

\begin{proof} Because $\mathcal L=P+N$ with $PN=NP=0$, $$N^{\tau_{n,j}-\tau_{n,i}}f=N^{\tau_{n,i+1}-\tau_{n,i}-l_{n,i}}\mathcal L^{\tau_{n,j}-\tau_{n,i+1}+l_{n,i}}f.$$
Therefore, by \eqref{eq:df} (with $t=0$), we have \begin{align*}
\|N^{\tau_{n,j}-\tau_{n,i}}f\|&\leqslant\|N^{\tau_{n,i+1}-\tau_{n,i}-l_{n,i}}\|\|\mathcal L^{\tau_{n,j}-\tau_{n,i+1}+l_{n,i}}f\|\\
&\leqslant O(1) \rho^{\tau_{n,i+1}-\tau_{n,i}-l_{n,i}} (\|f\|_1+ r^{\tau_{n,j}-\tau_{n,i+1}+l_{n,i}} D_\beta f)\\
&\leqslant O(1) \rho^{m_n} (\|f\|_1+ r^{(j-i-1)m_n} r^{l_{n,i}+\cdots+l_{n, j-1}} D_\beta f).
\end{align*}
Similarly, for $2\leqslant j\leqslant k_n$, \eqref{eq:estN} follows from $$N^{\tau_{n,j}}f=N^{\tau_{n,j}-\tau_{n,j-1}-l_{n, j-1}}\mathcal L^{\tau_{n,j-1}+l_{n, j-1}}f.$$
\end{proof}

\begin{lemma} Under the same assumptions as in Theorem \ref{thm:CLTdarray},
\begin{enumerate}
\item  the array is asymptotically negligible,\begin{equation}\label{eq:supvar}\lim_{n\to\infty}\sup_{1\leqslant i\leqslant k_n}\int_{\Omega} \frac{F_{n,i}^2}{\check{s}_n^2}d\mu=0;
\end{equation}
\item  an asymptotic variance formula holds,\begin{equation}\label{eq:sumvar}
\lim_{n\to\infty}\sum_{i=1}^{k_n}\int_{\Omega} \frac{F_{n,i}^2}{\check{s}_n^2}d\mu=1.
\end{equation}
\end{enumerate}
\end{lemma}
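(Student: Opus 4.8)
The plan is to prove the asymptotic negligibility \eqref{eq:supvar} first, using only the Lindeberg condition \eqref{eq:Lindebergdarray}, and then to feed it into the bookkeeping of the covariances that establishes the variance formula \eqref{eq:sumvar}. I set $V_{n,i}:=\int_\Omega F_{n,i}^2\,d\mu$, which by $T$-invariance of $\mu$ equals $\int_\Omega(F_{n,i}\circ T^{\tau_{n,i}})^2\,d\mu$, and note that each $F_{n,i}$, being a sum of centered functions composed with powers of $T$, satisfies $\int_\Omega F_{n,i}\,d\mu=0$. Writing $T_n:=\sum_{i=1}^{k_n}V_{n,i}$, the two assertions become $\sup_{1\le i\le k_n}V_{n,i}/\check{s}_n^2\to0$ and $T_n/\check{s}_n^2\to1$. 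For the first, I would fix $\epsilon>0$, split $V_{n,i}$ over $\{|F_{n,i}|<\epsilon\check{s}_n\}$ and its complement, bound the first piece by $\epsilon^2\check{s}_n^2$ and the second by $\sum_{k=1}^{k_n}\int_\Omega F_{n,k}^2{\bf 1}_{\{|F_{n,k}|\ge\epsilon\check{s}_n\}}\,d\mu=\check{s}_n^2L_{n,\epsilon}$, so that $\sup_i V_{n,i}/\check{s}_n^2\le\epsilon^2+L_{n,\epsilon}$; letting $n\to\infty$ and then $\epsilon\downarrow0$ proves \eqref{eq:supvar}. I record $\delta_n:=(\sup_{1\le i\le k_n}V_{n,i}/\check{s}_n^2)^{1/2}\to0$ and the consequence $\|F_{n,i}\|_1\le\|F_{n,i}\|_2=\sqrt{V_{n,i}}\le\delta_n\check{s}_n$, valid for all $i,n$.

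For \eqref{eq:sumvar} I would expand $\check{s}_n^2=T_n+2\sum_{1\le i<j\le k_n}c_{n,ij}$, where centering gives $c_{n,ij}:=\int_\Omega(F_{n,i}\circ T^{\tau_{n,i}})(F_{n,j}\circ T^{\tau_{n,j}})\,d\mu$ for the covariance of the $i$-th and $j$-th summands. For $i<j$ one has $\tau_{n,i}<\tau_{n,j}$, so $T$-invariance, duality of $\mathcal L$, and $PF_{n,i}=\int_\Omega F_{n,i}\,d\mu=0$ together with the decomposition $\mathcal L=P+N$, $PN=NP=0$ from \eqref{eq:trsfdecomp} give
\[
c_{n,ij}=\int_\Omega\mathcal L^{\tau_{n,j}-\tau_{n,i}}F_{n,i}\cdot F_{n,j}\,d\mu=\int_\Omega N^{\tau_{n,j}-\tau_{n,i}}F_{n,i}\cdot F_{n,j}\,d\mu,
\]
whence $|c_{n,ij}|\le\|N^{\tau_{n,j}-\tau_{n,i}}F_{n,i}\|\,\|F_{n,j}\|_1$. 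Since $m_n>0$, the hypothesis of \eqref{eq:estNdiff} holds for every such pair, and combining \eqref{eq:estNdiff} with the crude bounds $r^{l_{n,i}+\cdots+l_{n,j-1}}\le r^{l_{n,i}}$, $D_\beta F_{n,i}\le\|F_{n,i}\|$ and $\|F_{n,i}\|_1,\|F_{n,j}\|_1\le\delta_n\check{s}_n$ from the first step, I obtain
\[
|c_{n,ij}|\le C\rho_n\delta_n^2\check{s}_n^2+C\rho_n r_n^{j-i-1}r^{l_{n,i}}\|F_{n,i}\|\,\delta_n\check{s}_n .
\]

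Then I would sum over pairs. The first term, summed over the at most $k_n^2/2$ pairs, contributes $O\big((\rho_n k_n^2)\,\delta_n^2\,\check{s}_n^2\big)$; the second, summed first over $j>i$ (a geometric series in $r_n=r^{m_n}\le r$, bounded by $(1-r)^{-1}$) and then over $i$, contributes $O\big(\delta_n\,\check{s}_n\cdot\rho_n\sum_{i=1}^{k_n}r^{l_{n,i}}\|F_{n,i}\|\big)=O\big(\delta_n\,\check{s}_n^2\cdot\rho^{m_n}\sum_{i=1}^{k_n}r^{l_{n,i}}\|F_{n,i}\|/\check{s}_n\big)$. Dividing $|\check{s}_n^2-T_n|=2\big|\sum_{i<j}c_{n,ij}\big|$ by $\check{s}_n^2$ and using \eqref{eq:smallkdarray} to bound $\rho_n k_n^2=\rho^{m_n}k_n^2$ and \eqref{eq:condextradarray} to bound $\rho^{m_n}\sum_i r^{l_{n,i}}\|F_{n,i}\|/\check{s}_n$, I get $|1-T_n/\check{s}_n^2|=O(\delta_n^2)+O(\delta_n)\to0$, which is \eqref{eq:sumvar}.

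The truncation for \eqref{eq:supvar} and the transfer-operator identity for $c_{n,ij}$ are routine. The step that will need care — and that dictates doing \eqref{eq:supvar} first — is the estimate of $\sum_{i<j}|c_{n,ij}|$: the naive route of bounding $\|F_{n,i}\|_1\le\sqrt{V_{n,i}}$ and using Cauchy--Schwarz through $\sum_i\sqrt{V_{n,i}}\le\sqrt{k_nT_n}$ produces an error of the form $O\big((\rho^{m_n}k_n)T_n\big)+O\big(\sqrt{T_n\check{s}_n^2}\big)$, whose second summand carries a non-vanishing constant and both of whose summands involve the unknown $T_n$, so it yields only a circular inequality for $T_n/\check{s}_n^2$ rather than convergence. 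Replacing, in each product in the covariance bound, one factor by the asymptotic-negligibility estimate $\delta_n\check{s}_n$ with $\delta_n\to0$ is exactly what turns both contributions into genuine $o(\check{s}_n^2)$ errors, and this is the point at which hypotheses \eqref{eq:smallkdarray} and \eqref{eq:condextradarray} become precisely what is needed.
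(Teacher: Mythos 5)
Your proposal is correct and follows essentially the same route as the paper: the same truncation argument giving $\sup_i V_{n,i}/\check s_n^2\les \epsilon^2+L_{n,\epsilon}$, the same transfer-operator identity reducing the covariances to $\int_\Omega N^{\tau_{n,j}-\tau_{n,i}}F_{n,i}\cdot F_{n,j}\,d\mu$, and the same use of \eqref{eq:estNdiff} with one $L^1$-factor replaced by the negligibility bound (the paper keeps it as $\sup_j\|F_{n,j}\|_2/\check s_n\les(\epsilon^2+L_{n,\epsilon})^{1/2}$ where you name it $\delta_n$). Your closing remark about why naive Cauchy--Schwarz over all factors would be circular is exactly the point the paper's $\sup_j$ is designed to avoid.
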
 
\begin{proof}
\begin{enumerate} 
\item This is implied by the Lindeberg condition \eqref{eq:Lindebergdarray},
for \begin{align}
\int_{\Omega} \frac{F_{n,i}^2}{\check{s}_n^2}d\mu&\leqslant \int_{\Omega} \frac{F_{n,i}^2}{\check{s}_n^2}\cdot{\bf 1}_{{\{|\frac{F_{n,i}}{\check{s}_n}|<\epsilon\}}}d\mu+ \sum_{i=1}^{k_n}\int_{\Omega} \frac{F_{n,i}^2}{\check{s}_n^2}\cdot{\bf 1}_{{\{|\frac{F_{n,i}}{\check{s}_n}|\geqslant\epsilon\}}}d\mu\notag\\
&\leqslant \epsilon^2+L_{n,\epsilon}.\label{eq:smallvar}\end{align}
\item Use the transfer operator $\mathcal L$ to expand the total variance \begin{align*}
\check{s}_n^2&=\int_{\Omega} (F_{n,1}\circ T^{\tau_{n,1}}+\cdots+F_{n,k_n}\circ T^{\tau_{n,k_n}})^2 d\mu\\
&=\sum_{i=1}^{k_n}\int_{\Omega} F_{n,i}^2d\mu+2\sum_{1\leqslant i<j\leqslant k_n} \int_{\Omega} F_{n,i}\cdot F_{n,j}\circ T_n^{\tau_{n,j}-\tau_{n,i}}d\mu\\
&=\sum_{i=1}^{k_n}\int_{\Omega} F_{n,i}^2d\mu+2\sum_{1\leqslant i<j\leqslant k_n} \int_{\Omega} \mathcal L^{\tau_{n,j}-\tau_{n,i}} F_{n,i}\cdot F_{n,j} d\mu\\
&\overset{\eqref{eq:trsfdecomp2}}{=}\sum_{i=1}^{k_n}\int_{\Omega} F_{n,i}^2d\mu+2\sum_{1\leqslant i<j\leqslant k_n} \int_{\Omega}  N^{\tau_{n,j}-\tau_{n,i}} F_{n,i}\cdot F_{n,j} d\mu.
\end{align*}
The last equality holds because $\int_\Omega F_{n,i}d \mu=0$. Estimate
\begin{align*}
&\quad \left|\frac{1}{\check{s}_n^2}\sum_{1\leqslant i<j\leqslant k_n} \int_\Omega N^{\tau_{n,j}-\tau_{n,i}} F_{n,i}\cdot F_{n,j}d\mu\right|\\
&\overset{\eqref{eq:estNdiff}}{\leqslant} \frac{1}{\check{s}_n^2}C\sum_{1\leqslant i<j\leqslant k_n} (\rho_n\|F_{n,i}\|_1+\rho_n r_n^{j-i-1} r^{l_{n,i}+\cdots+l_{n,j-1}} D_{\beta} F_{n,i})\cdot {\|F_{n,j}\|_1}\\
&\leqslant C \sup_{1\leqslant j\leqslant k_n}\left(\rho_n k_n^2\frac{\|F_{n,j}\|_1^2}{\check{s}_n^2}+\frac{\rho_n}{1-r_n}\sum_{1\leqslant i\leqslant k_n}r^{l_{n,i}}\frac{D_\beta F_{n,i}}{\check{s}_n}\frac{\|F_{n,j}\|_1}{\check{s}_n}\right)\\
&\leqslant C \sup_{1\leqslant j\leqslant k_n}\left(\rho_n k_n^2\frac{\|F_{n,j}\|_2^2}{\check{s}_n^2}+\frac{\rho_n}{1-r_n}\sum_{1\leqslant i\leqslant k_n}r^{l_{n,i}}\frac{\|F_{n,i}\|}{\check{s}_n}\frac{\|F_{n,j}\|_2}{\check{s}_n}\right)\\
&\overset{\eqref{eq:smallvar}}{\leqslant} C \rho_n k_n^2(\epsilon^2+ L_{n,\epsilon})+\frac{\rho_n}{1-r_n}\sum_{1\leqslant i\leqslant k_n}r^{l_{n,i}}\frac{\|F_{n,i}\|}{\check{s}_n}(\epsilon^2+ L_{n,\epsilon})^{1/2}.
\end{align*}
Now the assumptions \eqref{eq:smallkdarray}, \eqref{eq:condextradarray} and \eqref{eq:Lindebergdarray} imply that the $\limsup$ of the upper bound is bounded by $K\epsilon$ for some $K>0$, hence \eqref{eq:sumvar} follows.
\end{enumerate}
\end{proof}

\begin{proof}[Proof of \autoref{thm:CLTdarray}]
Extending our probability space if necessary, we may assume that there exists an array of random variables $\{X_{n,i}\}_{i=1}^{k_n}$ such that $X_{n,i}, i=1,\cdots, k_n,$ are independent normal random variables and 
\begin{equation}\label{eq:expvar}\mathbb E X_{n,i}=0 \text{ and } \Var X_{n,i}=\Var F_{n,i}.\end{equation} Without loss of generality we may as well assume that for each $n$, $\{X_{n,i}\}_{i=1}^{k_n}$ and $\{F_{n,i}\circ T^{\tau_{n,i}}\}_{i=1}^{k_n}$ are independent. Define two random variables \begin{align*}F_n&=\frac{F_{n,1}\circ T^{\tau_{n,1}}+F_{n,2}\circ T^{\tau_{n,2}}+\cdots+ F_{n,k_n}\circ T^{\tau_{n,k_n}}}{\check{s}_n},\\ X_n&= \frac{X_{n,1}+\cdots+X_{n,k_n}}{\check{s}_n}.\end{align*}
$X_n$ is a normal random variable for being a sum of independent normal random variables and converges weakly to $\mathcal N(0,1)$ because of \eqref{eq:sumvar}. Since $F_n$ has variance $1$, the set of distributions of $F_n$ is mass-preserving. To show that $F_n$ also converges weakly to $\mathcal N(0,1)$, it suffices to prove that for any $h$ in the separating class $C_c^\infty(\mathbb R)$,$$\mathbb E h(F_n)-\mathbb E h(X_n)\to 0.$$  Letting for $2\leqslant i\leqslant k_n-1$
$$U_{n,i}:=\frac{F_{n,1}\circ T^{\tau_{n,1}}+\cdots+F_{n,i-1}\circ T^{\tau_{n,i-1}}}{\check{s}_n}+\frac{X_{n,i+1}+\cdots+X_{n,k_n}}{\check{s}_n}
$$
and $$U_{n,1}=\frac{X_{n,2}+\cdots+X_{n,k_n}}{\check s_n} \qquad  U_{n,k_n}=\frac{F_{n,1}\circ T^{\tau_{n,1}}+\cdots+F_{n,k_n-1}\circ T^{\tau_{n,k_n-1}}}{\check s_n},$$
 we can write, noting that $F_n=U_{n,k_n}+\frac1{\check s_n}F_{n,k_n}\circ T^{\tau_{n,k_n}}$ and $X_n=U_{n,1}+\frac1{\check s_n}X_{n,1}$, $$h(F_n)-h(X_n)=\sum_{i=1}^{k_n}h\left(U_{n,i}+\frac{F_{n,i}\circ T^{\tau_{n,i}}}{\check{s}_n}\right)-h\left(U_{n,i}+\frac{X_{n,i}}{\check{s}_n}\right).$$
 Use Taylor expansion to deduce that
\begin{multline*}h(F_n)-h(X_n)=\sum_{i=1}^{k_n} h'(U_{n,i})\left(\frac{F_{n,i}\circ T^{\tau_{n,i}}}{\check{s}_n}-\frac{X_{n,i}}{\check{s}_n}\right)\\+h''\left(U_{n,i}+\theta_{n,i}\frac{F_{n,i}\circ T^{\tau_{n,i}}}{\check{s}_n}\right)\frac{F_{n,i}^2\circ T^{\tau_{n,i}}}{2\check{s}_n^2}-h''\left(U_{n,i}+\tilde{\theta}_{n,i}\frac{X_{n,i}}{\check{s}_n}\right)\frac{X^2_{n,i}}{2\check{s}_n^2},\end{multline*}
where $\theta_{n,i}, \tilde{\theta}_{n,i}:\Omega\to [0,1]$. Rewrite the right-hand side as
\begin{multline}\label{eq:hdiff}
\sum_{i=1}^{k_n} h'(U_{n,i})\left(\frac{F_{n,i}\circ T^{\tau_{n,i}}}{\check{s}_n}-\frac{X_{n,i}}{\check{s}_n}\right)+h''(U_{n,i})\left(\frac{F_{n,i}^2\circ T^{\tau_{n,i}}}{2\check{s}_n^2}-\frac{X^2_{n,i}}{2\check{s}_n^2}\right)\\+
\left\{h''\left(U_{n,i}+\theta_{n,i}\frac{F_{n,i}\circ T^{\tau_{n,i}}}{\check{s}_n}\right)\frac{F_{n,i}^2\circ T^{\tau_{n,i}}}{2\check{s}_n^2}-h''(U_{n,i})\frac{F_{n,i}^2\circ T^{\tau_{n,i}}}{2\check{s}_n^2}\right\}\\-\left\{h''\left(U_{n,i}+\tilde{\theta}_{n,i}\frac{X_{n,i}}{\check s_n}\right)\frac{X^2_{n,i}}{2\check{s}_n^2}-h''(U_{n,i})\frac{X^2_{n,i}}{2\check{s}_n^2}\right\}.
\end{multline}
We are about to show that the expectation of \eqref{eq:hdiff} vanishes asymptotically. Denote by $$\mathbb E_{n,i}(\cdot):=\mathbb E(\cdot|F_{n,1}\circ T^{\tau_{n,1}}, \ldots, F_{n,i}\circ T^{\tau_{n,i}})$$ the corresponding conditional expectation. To estimate the expectation of the first summand in \eqref{eq:hdiff}, we write
\begin{align}
&\quad \mathbb E \left(\sum_{i=1}^{k_n}h'(U_{n,i})\left(\frac{F_{n,i}\circ T^{\tau_{n,i}}}{\check{s}_n}-\frac{X_{n,i}}{\check{s}_n}\right)\right)\notag\\
&=\sum_{i=1}^{k_n}\mathbb E \left(\mathbb E_{n,i}\left(h'(U_{n,i})\right)\cdot\frac{F_{n,i}\circ T^{\tau_{n,i}}}{\check{s}_n}\right)-\mathbb E h'(U_{n,i})\mathbb E\frac{X_{n,i}}{\check{s}_n}\notag\\
&\overset{\eqref{eq:expvar}}{=}\frac{1}{\check{s}_n}\sum_{i=2}^{k_n}\int_\Omega \mathcal L^{\tau_{n,i}}\mathbb E_{n,i} h'(U_{n,i})\cdot F_{n,i}d\mu\notag\\
&\overset{\eqref{eq:trsfdecomp2}}{=}\frac{1}{\check{s}_n}\sum_{i=2}^{k_n}\left(\mathbb E h'(U_{n,i})\int_\Omega F_{n,i}d\mu+\int_\Omega N^{\tau_{n,i}} \mathbb E_{n,i} h'(U_{n,i}) \cdot F_{n,i} d\mu\right)\notag\\
&=\frac{1}{\check{s}_n}\sum_{i=2}^{k_n}\int_\Omega N^{\tau_{n,i}} \mathbb E_{n,i} h'(U_{n,i}) \cdot F_{n,i} d\mu, \label{eq:est1}
\end{align}
where in the first equality we use the independence between $U_{n,i}$ and $X_{n,i}$ and $$\mathbb E_{n,i}(h'(U_{n,i})\cdot F_{n,i}\circ T^{\tau_{n,i}})=\mathbb E_{n,i}(h'(U_{n,i}))\cdot F_{n,i}\circ T^{\tau_{n,i}},$$
in the second equality we also use that $U_{n,1}$ is independent with $F_{n,1}\circ T^{\tau_{n,1}}$
and the last equality is due to $\int_{\Omega} F_{n,i}d\mu=0$.
Observe the following inequalities.
\begin{enumerate}
\item By \eqref{eq:estN},\begin{align*}
&\quad \frac{1}{\check{s}_n}\sum_{i=2}^{k_n}\left|\int_\Omega N^{\tau_{n,i}} \mathbb E_{n,i} h'(U_{n,i}) \cdot F_{n,i} d\mu\right|\\
&\leqslant C\sum_{i=2}^{k_n} \left(\rho_n\|\mathbb E_{n,i} h'(U_{n,i})\|_1+\rho_n r^{\tau_{n,i-1}+l_{n,i-1}}D_{\beta} (\mathbb E_{n,i} h'(U_{n,i}))\right)\cdot\frac{\|F_{n,i}\|_1}{\check{s}_n}.
\end{align*}
\item
$$\|\mathbb E_{n,i} h'(U_{n,i})\|_1\les \mathbb E(\mathbb E_{n,i}|h'(U_{n,i})|)\les \|h'\|_\infty.$$
\item Recall that $U_{n,i}=\frac{1}{\check{s}_n}\left(\sum_{j=1}^{i-1}F_{n,j}\circ T^{\tau_{n,j}}+\sum_{j=i+1}^{k_n}X_{n,j}\right)$. Because $\{X_{n,j}\}_{j=1}^{k_n}$ and $\{F_{n,j}\circ T^{\tau_{n,j}}\}_{j=1}^{k_n}$ are independent,
$$
D_{\beta} (\mathbb E_{n,i} h'(U_{n,i}))
\les \frac{\|h''\|_{\infty}}{\check{s}_n}D_{\beta}(F_{n,1}\circ T^{\tau_{n,1}}+\cdots+F_{n,i-1}\circ T^{\tau_{n,i-1}}).$$
\item For any $f\in L$ and $m\in\mathbb N$,
\begin{align}D_\beta(f\circ T^m)&=\sup_{b\in\beta, x,y\in b}\frac{|f\circ T^m(x)-f\circ T^m(y)|}{r(x,y)}\notag\\
&=\sup_{b\in\beta, x,y\in b}\frac{|f\circ T^m(x)-f\circ T^m(y)|}{r(T^mx, T^my)}\frac{r(T^mx,T^my)}{r(x,y)}\notag\\
&\les D_\Omega f\cdot r^{-m}\notag\\
&\les \max\{\frac{2\|f\|_\infty}r, D_\beta(f) \}\cdot r^{-m}=O(1)\|f\|r^{-m}. \label{eq:cmpgrow}
\end{align}
\end{enumerate}
We use these inequalities to estimate \eqref{eq:est1},
\begin{align}
&\quad \frac{1}{\check{s}_n}\sum_{i=2}^{k_n}\left|\int_{\Omega_n} N^{\tau_{n,i}} \mathbb E_{n,i} h'(U_{n,i}) \cdot F_{n,i} d\mu\right|\notag \\
&\leqslant O(1) \sum_{i=2}^{k_n}\left(\rho_n\|h'\|_\infty+\rho_n r^{\tau_{n,i-1}+l_{n, i-1}}\frac{\|h''\|_{\infty}}{\check{s}_n} \sum_{j=1}^{i-1}\frac{1}{r^{\tau_{n,j}}}\|F_{n,j}\|\right)\cdot \frac{ \|F_{n,i}\|_1}{\check{s}_n}\notag \\
&\leqslant O(1)   \left(k_n\rho_n+ \rho_n\sum_{i=2}^{k_n}\sum_{j=1}^{i-1}r_n^{i-j-1}r^{l_{n,j}+\cdots + l_{n,i-1}}\frac{\| F_{n,j}\|}{\check{s}_n}\right)\cdot \sup_{1\leqslant i\leqslant k_n}\frac{\|F_{n,i}\|_2}{\check{s}_n}\notag \\
&\leqslant O(1)   \left(k_n\rho_n+ \frac{\rho_n}{1-r_n} \sum_{1\leqslant j\leqslant k_n}r^{l_{n,j}}\frac{\| F_{n,j}\|}{\check{s}_n}\right)\cdot \sup_{1\leqslant i\leqslant k_n}\frac{\|F_{n,i}\|_2}{\check{s}_n}.\label{eq:estcr}
\end{align}
The bound tends to $0$ as $n\to\infty$ because of \eqref{eq:supvar} and assumptions \eqref{eq:smallkdarray} and \eqref{eq:condextradarray}.

The expectation of the second summand in \eqref{eq:hdiff} is estimated in a similar way.
We rewrite \begin{align*}
&\quad\mathbb E \sum_{i=1}^{k_n}h''(U_{n,i})\left(F_{n,i}^2\circ T^{\tau_{n,i}} -X^2_{n,i}\right)\\
&=\sum_{i=2}^{k_n} \int_{\Omega} \mathbb E_{n,i}h''(U_{n,i})\cdot F^2_{n,i}\circ T^{\tau_{n,i}}d\mu-\mathbb E h''(U_{n,i}) \Var X_{n,i}\\
&\overset{\eqref{eq:trsfdecomp2}}{=}\sum_{i=2}^{k_n} \mathbb E h''(U_{n,i})\Var F_{n,i}+\int_\Omega N^{\tau_{n,i}}\mathbb E_{n,i}h''(U_{n,i})\cdot F_{n,i}^2d\mu-\mathbb E h''(U_{n,i}) \Var X_{n,i}\\
 &\overset{\eqref{eq:expvar}}{=}\sum_{i=2}^{k_n}\int_{\Omega} N^{\tau_{n,i}}\mathbb E_{n,i}h''(U_{n,i})\cdot F^2_{n,i}d\mu.
 \end{align*}
Then we can repeat the estimate for \eqref{eq:est1} to deduce an upper-bound similar to \eqref{eq:estcr}.
 
The expectation of the third summand in \eqref{eq:hdiff} is equal to
\begin{align*}
&\quad \sum_{i=1}^{k_n}\int_{\Omega} \left(h''\left(U_{n,i}+\theta_{n,i}\frac{F_{n,i}\circ T^{\tau_{n,i}}}{\check{s}_n}\right)-h''(U_{n,i})\right)\frac{F_{n,i}^2\circ T^{\tau_{n,i}}}{2\check{s}_n^2} d\mu\\
& =\sum_{i=1}^{k_n}\int_\Omega \left(h''\left(U_{n,i}+\theta_{n,i}\frac{F_{n,i}\circ T^{\tau_{n,i}}}{\check{s}_n}\right)-h''(U_{n,i})\right)\frac{F_{n,i}^2\circ T^{\tau_{n,i}}}{2\check{s}_n^2}\cdot \\
&\qquad   \left({\bf 1}_{\{|{F_{n,i}\circ T^{\tau_{n,i}}}|<\epsilon{\check{s}_n}\}}+{\bf 1}_{\{|{F_{n,i}\circ T^{\tau_{n,i}}}|\geqslant\epsilon{\check{s}_n}\}}\right)d\mu\\
&\leqslant  \epsilon \|h'''\|_\infty\sum_{i=1}^{k_n}\int_{\Omega} \frac{F_{n,i}^2}{\check{s}_n^2}d\mu + \|h''\|_\infty L_{n,\epsilon}
\end{align*}
for any $\epsilon>0$. This expectation converges to $0$ in view of  \eqref{eq:sumvar} and \eqref{eq:Lindebergdarray}. The expectation of the last summand in \eqref{eq:hdiff} is controlled in the same way as the third summand.
 \end{proof}
 
Applying this theorem to Birkhoff sums, we obtain the following result.
\begin{corollary}\label{thm:CLTone} Given a Gibbs-Markov system $(\Omega, \mu, T, \alpha)$ and a sequence of centered functions $\{f_n\}$ in $L$. Let $s_n^2:=\Var(f_n+\cdots+f_n\circ T^{n-1})$. Assume that there are sequences of integers $l_n>m_n>0$ with the following properties.
\begin{enumerate}
\item $$\limsup_{n\to\infty}k_n^2\rho^{m_n}<\infty,$$ where $k_n:=[\frac{n}{l_n+m_n}]$.
\item \begin{equation}\label{eq:ergsmallnorm}
\limsup_{n\to\infty}k_n\rho^{m_n}\frac{\|f_n\|}{s_n}<\infty.
\end{equation} 
\item For every $1\leqslant i\leqslant l_n$\begin{equation}\label{eq:ergsmallvar}
\frac{1}{s_n^2}\int_\Omega (f_n+\cdots+f_n\circ T^{i-1})^2d\mu\to 0.
\end{equation}
\item \begin{equation}\label{eq:erggapvar}
\frac{k_n}{s_n^2}\int_\Omega (f_n+\cdots+f_n\circ T^{m_n-1})^2d\mu\to 0.
\end{equation}
\item \begin{equation}\label{eq:ui}
\frac{k_n}{s_n^2}(f_n+\cdots+f_n\circ T^{l_n-1})^2 \text{ is uniformly integrable.}
\end{equation}
\end{enumerate}
Then $\frac{f_n+\cdots+f_n\circ T^{n-1}}{s_n}\Rightarrow\mathcal N(0,1).$ 
\end{corollary}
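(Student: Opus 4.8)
The plan is to realise the Birkhoff sum as the sum of a dynamical array plus an $L^2$-negligible remainder and then to quote Theorem~\ref{thm:CLTdarray}. With $k_n=[n/(l_n+m_n)]$ as in the statement, set $\tau_{n,i}:=(i-1)(l_n+m_n)$ and $F_{n,i}:=\sum_{j=1}^{l_n}f_n\circ T^{j-1}$ for $1\le i\le k_n$; then $f_{n,i,j}=f_n$ is centred, $l_{n,i}=l_n$, the spacing condition $\tau_{n,i-1}+l_{n,i-1}\le\tau_{n,i}$ holds, and the minimal spacing of the array equals $m_n>0$. Writing $G_n:=\sum_{j=0}^{m_n-1}f_n\circ T^{j}$, $A_n:=\sum_{i=1}^{k_n}F_{n,i}\circ T^{\tau_{n,i}}$, $R_n':=\sum_{i=1}^{k_n}G_n\circ T^{\tau_{n,i}+l_n}$ and $R_n'':=\sum_{j=k_n(l_n+m_n)}^{n-1}f_n\circ T^{j}$, one obtains the exact splitting $B_n:=\sum_{j=0}^{n-1}f_n\circ T^{j}=A_n+R_n'+R_n''$, and since $k_n(l_n+m_n)\le n<(k_n+1)(l_n+m_n)$ the tail $R_n''$ is a Birkhoff sum of length $<l_n+m_n\le 2l_n$.

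Step~1 is to show $\|R_n'+R_n''\|_2=o(s_n)$. For $R_n''$ I split it into at most two Birkhoff sums of $f_n$ of length $\le l_n$ and bound each in $L^2$ by \eqref{eq:ergsmallvar} (using $T$-invariance of $\mu$ to drop the time shift). For $R_n'$, since $\mathbb E R_n'=0$ and $\int_\Omega G_n\,d\mu=0$, \eqref{eq:trsfdecomp2} gives $\Var(R_n')=k_n\|G_n\|_2^2+2\sum_{1\le i<j\le k_n}\int_\Omega N^{(j-i)(l_n+m_n)}G_n\cdot G_n\,d\mu$; the diagonal term is $o(s_n^2)$ by \eqref{eq:erggapvar}, and for the off-diagonal term I write $N^{(j-i)(l_n+m_n)}=N^{l_n}\mathcal L^{(j-i-1)(l_n+m_n)+m_n}$, apply \eqref{eq:df} (with $t=0$) to the $\mathcal L$-power and $\|N^{l_n}\|=O(\rho^{l_n})\le O(\rho^{m_n})$ to the $N$-power, and use $D_\beta G_n=O(\|f_n\|r^{-m_n})$ (from \eqref{eq:cmpgrow}) and $\|G_n\|_1\le\|G_n\|_2$, so that the factor $r^{m_n}$ cancels and each term is $O(\rho^{m_n})(r^{(j-i-1)(l_n+m_n)}\|f_n\|+\|G_n\|_2)\|G_n\|_2$. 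Summing the geometric series over $i<j$ bounds the off-diagonal part by $O(\rho^{m_n})(k_n\|f_n\|\,\|G_n\|_2+k_n^2\|G_n\|_2^2)$, which after division by $s_n^2$ tends to $0$ by \eqref{eq:ergsmallnorm}, the assumption $\limsup_n k_n^2\rho^{m_n}<\infty$, and \eqref{eq:erggapvar} (the last giving both $\|G_n\|_2^2/s_n^2\to0$ and $\|G_n\|_2/s_n\to0$). Hence $|\check s_n-s_n|\le\|R_n'+R_n''\|_2=o(s_n)$, so $\check s_n/s_n\to1$; in particular $\check s_n>0$ for all large $n$, and $(R_n'+R_n'')/s_n\to0$ in $L^2$, a fortiori in probability.

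Step~2 is to verify the hypotheses of Theorem~\ref{thm:CLTdarray} for $\{(F_{n,i},\tau_{n,i})\}$, after discarding the finitely many indices with $\check s_n=0$ (which changes neither the array nor its limiting distribution): hypothesis~(1) then holds; \eqref{eq:smallkdarray} holds because $m_n>0$ and $\limsup_n k_n^2\rho^{m_n}<\infty$; for \eqref{eq:condextradarray} note that $F_{n,i}$ is the same function $F_n:=\sum_{j=0}^{l_n-1}f_n\circ T^j$ for every $i$, with $\|F_n\|\le\sum_{j=0}^{l_n-1}\|f_n\circ T^j\|=O(\|f_n\|r^{-l_n})$ by \eqref{eq:cmpgrow}, so $\rho^{m_n}\sum_i r^{l_n}\|F_n\|/\check s_n=O(1)\,k_n\rho^{m_n}\|f_n\|/\check s_n$, bounded by \eqref{eq:ergsmallnorm} and $\check s_n\sim s_n$; and for the Lindeberg condition \eqref{eq:Lindebergdarray}, $L_{n,\epsilon}=\frac{k_n}{\check s_n^2}\int_\Omega F_n^2\,{\bf 1}_{\{|F_n|\ge\epsilon\check s_n\}}\,d\mu$, so with $Y_n:=\frac{k_n}{s_n^2}F_n^2$ (uniformly integrable by \eqref{eq:ui}) this equals $\frac{s_n^2}{\check s_n^2}\int Y_n\,{\bf 1}_{\{Y_n\ge\epsilon^2 k_n\check s_n^2/s_n^2\}}\,d\mu\le 2\sup_m\int_\Omega Y_m\,{\bf 1}_{\{Y_m\ge\epsilon^2 k_n/2\}}\,d\mu\to0$, the last step using $k_n\to\infty$. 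That $k_n\to\infty$ is itself a short argument: if $k_n$ were bounded along a subsequence, $l_n$ would be of order $n$ there, and \eqref{eq:ergsmallvar} with $i=l_n$ together with $T$-invariance would present $B_n$ as a bounded sum of $L^2$-terms each $o(s_n)$, forcing $s_n=o(s_n)$, a contradiction. Theorem~\ref{thm:CLTdarray} then yields $A_n/\check s_n\Rightarrow\mathcal N(0,1)$.

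Step~3: since $\check s_n/s_n\to1$, Slutsky's theorem gives $A_n/s_n\Rightarrow\mathcal N(0,1)$, and adding the $o_{\mathbb P}(1)$ term $(R_n'+R_n'')/s_n$ gives $B_n/s_n\Rightarrow\mathcal N(0,1)$, which is the claim. I expect the off-diagonal estimate in Step~1 to be the main obstacle: one must control the correlations between distinct gaps with the sharp geometric exponent, which is why it is essential to use the $N=\mathcal L-P$ splitting as behind \eqref{eq:estNdiff} (rather than a crude operator-norm bound on powers of $N$) and to exploit $l_n>m_n$; the remaining steps are routine, resting on \eqref{eq:cmpgrow} and the stated hypotheses.
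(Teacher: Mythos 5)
Your proof is correct and follows essentially the same route as the paper: the same block/gap decomposition with blocks of length $l_n$ and gaps of length $m_n$, negligibility of the gap and tail contributions via the $N=\mathcal L-P$ splitting and \eqref{eq:df}, and verification of the hypotheses of Theorem~\ref{thm:CLTdarray} exactly as in the paper's argument. The only (harmless, arguably cleaner) deviations are bookkeeping: you leave the tail out of the array instead of appending a $(k_n+1)$-st block, you get $\check s_n/s_n\to1$ from the $L^2$ triangle inequality rather than estimating the cross-covariances directly, and you supply the short argument that $k_n\to\infty$, which the paper leaves implicit in its use of \eqref{eq:ui}.
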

\begin{proof}
Let $F_{n,i}:=f_n+\cdots+f_n\circ T^{l_n-1}, g_{n,i}:=f_n+\cdots+f_n\circ T^{m_n-1}$ and $\tau_{n,i}:=(i-1)(l_n+m_n)$ for $1\leqslant i\leqslant k_n$, then $$f_n+\cdots+f_n\circ T^{k_n(l_n+m_n)-1}=\sum_{i=1}^{k_n} F_{n,i}\circ T^{\tau_{n,i}}+g_{n,i}\circ T^{\tau_{n,i}+l_n}.$$ 
To complete the ergodic sum,  let $F_{n,k_n+1}:=f_n+\cdots +f_n\circ T^{\min\{l_n, n-k_n(l_n+m_n)\}-1}$, $\tau_{n,k_n+1}:=k_n(l_n+m_n)$ and $g_{n,k_n+1}:=f_n+\cdots+f_n\circ T^{n-k_n(l_n+m_n)-l_n-1}$ if necessary.
The following two properties ensure that the dynamical array $\{(F_{n,i},\tau_{n,i}): i=1,\ldots, k_n+1\}$ has the same distributional limit as the ergodic sum.
\begin{enumerate}
\item \begin{equation}\label{eq:simvar}\frac{\check{s}_n}{s_n}\to 1,\end{equation} where $\check{s}_n^2=\Var \sum_{i=1}^{k_n+1} F_{n,i}\circ T^{\tau_{n,i}}$.
\item \begin{equation}\label{eq:gapdistr}\frac{\sum_{i=1}^{k_n+1}g_{n, i} \circ T^{\tau_{n,i}+l_n}}{\check{s}_n}\Rightarrow 0.\end{equation}
\end{enumerate}
In fact, to see \eqref{eq:simvar} first note that
$$s_n^2=\check{s}_n^2+\Var \sum_{i=1}^{k_n+1} g_{n,i}\circ T^{\tau_{n,i}+l_n}+2 \int_\Omega \sum_{i=1}^{k_n+1} F_{n,i}\circ T^{\tau_{n,i}}\cdot\sum_{i=1}^{k_n+1} g_{n,i}\circ T^{\tau_{n,i}+l_n}d\mu.$$
With conditions \eqref{eq:ergsmallnorm} and \eqref{eq:erggapvar}, arguments involving the transfer operator similar to those used in proving \eqref{eq:sumvar} indicate that $$\lim_{n\to\infty} \frac{\Var \sum_{i=1}^{k_n+1} g_{n,i}\circ T^{\tau_{n,i}+l_n}}{s_n^2}=\lim_{n\to\infty} \sum_{i=1}^{k_n+1}\frac{\int_\Omega g_{n,i}^2d\mu}{s_n^2},$$
which is $0$ by \eqref{eq:ergsmallvar} and \eqref{eq:erggapvar}. As we can separate
\begin{multline*}\int_\Omega F_{n,i}\circ T^{\tau_{n,i}}\cdot\sum_{j=1}^{k_n+1} g_{n,j}\circ T^{\tau_{n,j}+l_n}d\mu=\sum_{j\ges i+1}\int_\Omega F_{n,i}\cdot g_{n,j}\circ T^{\tau_{n,j}+l_n-\tau_{n,i}}d\mu
\\+\sum_{j\les i-2}\int_\Omega g_{n,j}\cdot F_{n,i}\circ T^{\tau_{n,i}-\tau_{n,j}-l_n}d\mu
+\int_\Omega F_{n,i}\cdot g_{n,i}\circ T^{l_n}d\mu+\int_\Omega g_{n,i-1}\cdot F_{n,i}\circ T^{m_n}d\mu,
\end{multline*}
$$\int F_{n,i}\cdot g_{n,i}\circ T^{l_n}d\mu=\int (f_n+\cdots+f_n\circ T^{l_n-m_n-1})\cdot g_{n,i}\circ T^{l_n}d\mu+\int g_{n,i}\cdot g_{n,i}\circ T^{m_n}d\mu$$
and similarly for $\int_\Omega g_{n,i-1}\cdot F_{n,i}\circ T^{m_n}d\mu$, the techniques of transfer operator can be used again to show that$$\frac{1}{s_n^2}\int_\Omega \sum_{i=1}^{k_n+1}F_{n,i}\circ T^{\tau_{n,i}}\cdot\sum_{j=1}^{k_n+1} g_{n,j}\circ T^{\tau_{n,j}+l_n}d\mu\to 0.$$
Thus \eqref{eq:simvar} holds. The previous arguments also imply that \eqref{eq:gapdistr} is just a consequence of \eqref{eq:erggapvar}. Hence we only need to verify the conditions in Theorem \ref{thm:CLTdarray} for the dynamical array $\{(F_{n,i},\tau_{n,i}): i=1,\ldots, k_n+1\}$.  \eqref{eq:condextradarray} is taken care of by \eqref{eq:ergsmallnorm} since
$$\limsup_{n\to\infty}\rho^{m_n}\sum_{1\les i \les k_n}r^{l_n}\frac{\|F_{n,i}\|}{\check{s}_n}\overset{\eqref{eq:cmpgrow}}{\les} O(1)\limsup_{n\to\infty}\rho^{m_n} k_n\frac{\|f_n\|}{\check{s}_n}.$$
Note that $$\sum_{i=1}^{k_n}\int_\Omega \frac{F_{n,i}^2}{\check{s}_n^2} {\bf 1}_{\{|F_{n,i}|\ges\epsilon\check{s}_n\}} d\mu=k_n\int_\Omega  \frac{F_{n,1}^2}{\check s_n^2} {\bf 1}_{\{|F_{n,1}|\ges\epsilon\check{s}_n\}} d\mu,$$
the Lindeberg condition \eqref{eq:Lindebergdarray} follows from \eqref{eq:ui}, \eqref{eq:ergsmallvar} and \eqref{eq:simvar}.
\end{proof}

\begin{remark}Theorem \ref{thm:CLTdarray} also can be generalized with the same assumptions to more general dynamical systems. It in fact holds for any system for which the transfer operator satisfies the Doeblin-Fortet inequality \eqref{eq:df} and for which the composition operator satisfies the inequality \eqref{eq:cmpgrow} (or in the case of Lipschitz norm, $r(T^mx,T^my)\les\frac{r(x,y)}{r^m}$). Note that the inequalities \eqref{eq:df} and \eqref{eq:cmpgrow} are bounded at the rates of $r$ and $r^{-1}$ respectively.
\end{remark}

\section{Applications to the large sample theory in statistics}\label{sec:stat}
The CLT under the Lindeberg condition has many applications, in particular in nonparametric statistics. The book \cite{Ferguson1996} provides a glimpse on these applications, though it is not a complete list. Here we restrict to one particular case, the famous Behrens-Fisher problem. In what follows, consider the setup for the two sample problem in a Gibbs-Markov dynamical system $(\Omega, \mu, T, \alpha )$. 
\begin{definition}\label{def:CLTlin}Denote by $\tilde{L}$ the set of all measurable functions $f:\Omega\to \mathbb R$ for which there exists a sequence of functions $\{f_n:n\geqslant 1\}$ in $L$ such that $\|f-f_n\|_2\to 0$.
\end{definition}

Consider two functions $\phi,\psi:\Omega\to\mathbb R$ in the class $\tilde{L}$, which determine two stationary sequences $X_n=\phi\circ T^n$ and $Y_n=\psi\circ T^n$.  For simplicity we assume that the distributions $\mu_\phi$ of $\phi$ and $\mu_\psi$ of  $\psi$ have no atoms. Based on observations $X_1,...,X_m$ and $Y_1,...,Y_n$, the Behrens-Fisher problem is  to determine whether the distributions  of $\phi$ and  $\psi$ are different or not in a statistical sense.  We shall deal with this problem when the distributions differ in their means, that is $\int \phi d\mu\ne \int \psi d\mu$.
  
   The classical solution for this problem (to be the most powerful test) is the $t$-test which only works  exactly under normal distribution, independence and equal variances.  In all other cases some type of approximation is needed. In particular, when the distributions of $\phi$ and $\psi$ are completely  unknown, the two sample Wilcoxon rank sum test is widely used.
   Consider $m,n\in \mathbb N$ and observations $X_1,...,X_m$ and $Y_1,...,Y_n$. Define $R_i$ to be the rank of $X_i$ among all $n+m$ observations $X_1,...,X_m,Y_1,...,Y_n$. Then
   $$ W_{m,n}= \sum_{i=1}^m R_i$$
   is  the two sample  Wilcoxon rank sum test. In order to solve the problem in a nonparametric setup one needs to determine the asymptotic distribution of $W_{m,n}$.
     
 \begin{align*}
 & W_{m,n}=  \sum_{i=1}^m \sum_{k=1}^n {\bf 1}_{\{Y_k\les X_i\}}+\sum_{i=1}^m\sum_{k=1}^m {\bf 1}_{\{X_k\les X_i\}}\\
 & = \sum_{i=1}^m \sum_{k=1}^n {\bf 1}_{\{Y_k\les X_i\}} + \frac {m(m-1)}2\\
 & =\left(\sum_{i=1}^m\sum_{k=1}^n \left({\bf 1}_{\{Y_k\les X_i\}}- \int{\bf 1}_{\{Y_k\les t\}} \mu_\phi(dt) -\int{\bf 1}_{\{t\les X_i\}} \mu_\psi(dt) + \iint{\bf 1}_{\{s\les t\}}\mu_\phi(dt)\mu_\psi(ds)\right)\right)\\
 & \quad + m\left(\sum_{k=1}^n\int {\bf 1}_{\{Y_k\les t\}} \mu_\phi(dt) -  n\iint {\bf 1}_{\{s\les t\}}\mu_\phi(dt)\mu_\psi(ds)\right)\\
 & \quad +n\left(\sum_{i=1}^m\int {\bf 1}_{\{t\les X_i\}} \mu_\psi(dt) -  m\iint {\bf 1}_{\{s\les t\}}\mu_\phi(dt)\mu_\psi(ds)\right)\\
 & \quad +\left(mn\iint{\bf 1}_{\{s\les t\}}\mu_\phi(dt)\mu_\psi(ds) + \frac {m(m-1)} 2\right)\\
 & =: A+mB_n+nC_m+D.
 \end{align*}
 We first give conditions under which the second moment of $A$, normalized by $m^{3}$  converges to zero as $m\to \infty$ and $ n/m\to \lambda\in (0,1)$. This can be seen directly or by applying  \cite{DenkerGordin2014} when  $(x,y)\mapsto {\bf 1}_{\{\psi(y)\les \phi(x)\}}$ approximately belongs to the projective tensor product $L_{2,\pi}(\mu^2)$ over $L_2(\mu^2)$ and therefore the variance of the approximation $\tilde A$ to $A$ increases like $\sqrt{nm}\|\tilde A\|_{L_{2,\pi}(\mu^2)}$.  We refer to \cite{DenkerGordin2014} for the definitions and properties of projective tensor products. Alternatively, assuming that the distributions of $\psi$ and $\phi$ are absolutely continuous with respect to Lebesgue measure and have a bounded density, one could use \cite[Theorem 1 or Lemma 3]{DenkerKeller1986} to show that the variance of $A$ is of smaller order. As an example, we prove
 \begin{proposition} Assume that the distributions $\mu_\phi$ and $\mu_\psi$ satisfy
 $$ \mu_\phi(I) \les K \eta^r\quad \mbox{ and}\quad \mu_\psi(I)\les K\eta^r\qquad \forall\eta>0, \forall\text{interval $I$ of length $\eta$}$$
 for some $K>0$ and $r\in(\frac 45, 1]$ and assume that $\|\phi\|_\infty$ and $\|\psi\|_\infty$ are finite. Then, as $n/m\to\lambda\in (0,1)$, $A$ has a representation $A=A_1+A_2$ so that $$\Var A_1=o(m^3)\quad \text{ and } \quad \mathbb E|A_2|=o(m^{3/2}).$$ Therefore, normalized by $m^{3/2}$, $A$ does not contribute to the distributional limit of $W_{m,n}$.
 \end{proposition}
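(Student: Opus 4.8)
The plan is to realise $A$ as a doubly degenerate two–sample $U$–statistic sum, to smooth out the jump in its kernel, and to discard a remainder which is negligible in $L^1$. First I would rewrite $A$: since $\mu_\phi,\mu_\psi$ have no atoms, $\int{\bf 1}_{\{Y_k\le t\}}\mu_\phi(dt)=\bar F_\phi(Y_k)$ and $\int{\bf 1}_{\{t\le X_i\}}\mu_\psi(dt)=F_\psi(X_i)$, where $\bar F_\phi(y):=\mu_\phi([y,\infty))$ and $F_\psi(x):=\mu_\psi((-\infty,x])$; with $\theta:=\iint{\bf 1}_{\{s\le t\}}\mu_\phi(dt)\mu_\psi(ds)$ this gives
$$A=\sum_{i=1}^m\sum_{k=1}^n H(X_i,Y_k),\qquad H(x,y)={\bf 1}_{\{y\le x\}}-\bar F_\phi(y)-F_\psi(x)+\theta ,$$
a kernel which is \emph{doubly centred}: $\int H(x,y)\,\mu_\psi(dy)=\int H(x,y)\,\mu_\phi(dx)=0$. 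Were the $X_i$ and $Y_k$ independent, this double degeneracy alone would give $\Var A=mn\,\Var H=O(m^2)=o(m^3)$; the point is to recover this under the weak dependence of the Gibbs--Markov sequences, the obstruction being that $y\mapsto{\bf 1}_{\{y\le x\}}$ is not H\"older, so the transfer operator machinery of Section~\ref{sec:transfer} does not apply to $H$ directly.

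Next, fix a scale $\delta=\delta_m\downarrow 0$ (rate chosen at the end) and replace ${\bf 1}_{\{t\ge0\}}$ by a smooth nondecreasing $g_\delta$ equal to $1$ on $[0,\infty)$ and to $0$ on $(-\infty,-\delta]$; let $H_\delta$ be the double centring (with respect to $\mu_\phi\otimes\mu_\psi$) of $h_\delta(x,y):=g_\delta(x-y)$, and put $A_1:=\sum_{i,k}H_\delta(X_i,Y_k)$, $A_2:=A-A_1$. Since $0\le g_\delta(t)-{\bf 1}_{\{t\ge0\}}\le{\bf 1}_{\{-\delta<t<0\}}$ for all $t$ and, by the hypothesis $\mu_\phi(I),\mu_\psi(I)\le K\eta^r$, the three centring corrections are $O(\delta^r)$ uniformly, one obtains $|H-H_\delta|(x,y)\le{\bf 1}_{\{|x-y|<\delta\}}+C\delta^r$, whence
$$\mathbb E|A_2|\le\sum_{i,k}\mathbb P\bigl(|\phi\circ T^i-\psi\circ T^k|<\delta\bigr)+C\,mn\,\delta^r .$$
The $\min(m,n)$ diagonal terms $i=k$ contribute only $O(m)$. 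For $i\ne k$, partitioning the (bounded) range of $\phi$ into intervals of length $\delta$ and combining topological mixing with the spectral gap \eqref{eq:trsfdecomp} — after approximating the step functions that appear, and $\phi,\psi$ themselves, by elements of $L$ — yields $\mathbb P(|\phi\circ T^i-\psi\circ T^k|<\delta)\le C\delta^r$ uniformly over the well-separated pairs, the short-range pairs $0<|i-k|\le C\log m$ being absorbed into the $O(m)$ term and the remaining pairs controlled by the exponential factor $\rho^{|i-k|}$. Thus $\mathbb E|A_2|=O(m\log m+m^2\delta^r)=o(m^{3/2})$ as soon as $\delta_m=o(m^{-1/(2r)})$.

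It remains to bound the second moment of $A_1=\sum_{i,k}H_\delta(X_i,Y_k)$, now with a \emph{smooth, doubly degenerate} kernel. Expanding $g_\delta$ in a Fourier series and double-centring termwise writes $H_\delta(x,y)=\sum_\ell c_\ell\,\widetilde u_\ell(x)\widetilde v_\ell(y)$ with $\widetilde u_\ell=e^{i\omega_\ell x}-\widehat\mu_\phi(\omega_\ell)$, $\widetilde v_\ell=e^{-i\omega_\ell y}-\widehat\mu_\psi(-\omega_\ell)$ centred and bounded, so that $A_1=\sum_\ell c_\ell\bigl(\sum_i\widetilde u_\ell(X_i)\bigr)\bigl(\sum_k\widetilde v_\ell(Y_k)\bigr)$ is a sum of products of centred Birkhoff sums of the bounded observables $\widetilde u_\ell\circ\phi$, $\widetilde v_\ell\circ\psi$. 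Expanding $\mathbb E(A_1^2)$ and using the decay of correlations of Section~\ref{sec:transfer}: the coincident configurations $(i,k)=(i',k')$ reassemble into $mn\,\|H_\delta\|_{L^2(\mu_\phi\otimes\mu_\psi)}^2+o(\cdot)=O(m^2)$ (because $\|H_\delta\|_{L^2}\le\|H\|_{L^2}+o(1)=O(1)$), while for the remaining configurations the double degeneracy — applied once, to the largest gap among $\{i,k,i',k'\}$ — makes each covariance small, the sum over all of them being $o(m^3)$ once $\delta_m$ is not too small; a parallel argument gives $|\mathbb E A_1|=o(m^{3/2})$. Alternatively, these bounds are instances of the $U$–statistic estimates of \cite{DenkerKeller1986,DenkerGordin2014}, obtained there from decay of correlations together with the projective tensor structure of the (smoothed) kernel. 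Taking $\delta_m=m^{-\varepsilon}$ for small $\varepsilon>0$ meets both requirements at once, and it is exactly the balancing of the upper bound on $\delta_m$ coming from $A_2$ against the lower bound forced by the error terms in $\mathbb E(A_1^2)$ that produces the threshold $r>\tfrac45$. Setting $A_1':=A_1-\mathbb E A_1$ and $A_2':=A_2+\mathbb E A_1$ gives the stated decomposition: $A_1'/m^{3/2}\to0$ in $L^2$, $A_2'/m^{3/2}\to0$ in $L^1$, so $A/m^{3/2}\to0$ in probability, and since $W_{m,n}=A+mB_n+nC_m+D$, the term $A$ does not affect the limiting distribution of $m^{-3/2}W_{m,n}$.

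The main obstacle is the third step, and within it the calibration of $\delta_m$: the smoothing remainder $A_2$ wants $\delta_m$ small, the variance estimate for $A_1$ wants it not too small, and the two windows overlap only thanks to $r>\tfrac45$ — the case $r=1$ being the tightest, which is why $g_\delta$ must be taken genuinely smooth rather than piecewise linear so that the Fourier mass $\sum_\ell|c_\ell|$ stays merely logarithmic in $1/\delta$. A secondary technical nuisance, shared with the rest of the section, is that composition with $T^m$ inflates H\"older seminorms by $r^{-m}$ whereas mixing over a gap gains only $\rho^m$, and the relation between $\rho$ and $r$ is unknown; this is circumvented by keeping every kernel bounded, so that only $L^1$ and $L^\infty$ norms of the ``distant'' factors enter the correlation estimates, and by peeling the two centrings off one variable at a time.
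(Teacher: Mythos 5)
Your overall architecture matches the paper's: write $A=\sum_{i,k}H(X_i,Y_k)$ with a doubly centred kernel, replace $H$ by an approximation whose degenerate $U$-statistic admits a good second-moment bound, control the discarded part in $L^1$ via the H\"older condition on the marginals, and let $r>\frac45$ arise from the compatibility of the two constraints on the approximation scale. The paper implements the approximation not by mollification but by a staircase: it covers $\{y\les x\}$ by $q$ rectangles $I_j=J_j\times[-M,\min J_j]$, so that $\tilde h_q={\bf 1}_I(\phi,\psi)$ is a finite sum of indicator tensor products with explicit projective norm $\|\tilde h_q\|_{L_{4,\pi}(\mu^2)}\les q\left[K(2M)^rq^{-r}\right]^{1/4}$, feeds this into Lemma 4 of \cite{DenkerGordin2014} to get $\|A_1\|_{L_2}=O(mq^{1-r/4})$, bounds the remainder by $K(2M)^rq^{-r}nm$, and chooses $q=m^\tau$ with $\frac1{2r}<\tau<\frac1{2-\frac r2}$ --- the interval being nonempty exactly when $r>\frac45$. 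Your smooth sigmoid plus Fourier expansion is a variant of the same tensor-product idea, but two load-bearing steps do not go through as written.

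First, your bound $\mathbb E|A_2|\les\sum_{i,k}\mathbb P(|\phi\circ T^i-\psi\circ T^k|<\delta)+Cmn\delta^r$ calls for a joint small-ball estimate, and your route to $\mathbb P(|\phi\circ T^i-\psi\circ T^k|<\delta)\les C\delta^r$ --- approximating the step functions and $\phi,\psi$ by elements of $L$ and invoking the spectral gap --- cannot be completed from the stated hypotheses: $\phi,\psi$ lie only in $\tilde L$ (approximable in $L^2$ with no rate), indicators of intervals composed with them are not in $L$, and no quantitative $\|\cdot\|$-approximation is available. The paper avoids this entirely because its expectations are taken with respect to $\mu(du)\mu(dv)$, i.e.\ the two samples sit on independent coordinates, so the error is measured by $\mu_\phi\times\mu_\psi$ of the discarded region and the marginal H\"older condition suffices. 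Second, your variance bound for $A_1$ appeals to the decay of correlations of Section~\ref{sec:transfer} for the factors $e^{i\omega_\ell\phi}$ and $e^{-i\omega_\ell\psi}$; these are again not in $L$, so the transfer-operator estimates do not apply, and the correct tool is precisely the projective-tensor-product lemma of \cite{DenkerGordin2014} that you mention only as an alternative. Moreover your quantitative claims are mutually inconsistent: if the Fourier mass $\sum_\ell|c_\ell|$ were genuinely logarithmic in $1/\delta$ and entered the variance bound termwise, then $\Var A_1=O(m^2\log^2(1/\delta))$ for every $\delta$ and no threshold on $r$ would emerge from the balancing you describe; in the paper the threshold comes from the exponent $q^{1-r/4}$ in the projective bound against $q^{-r}$ in the remainder, and your scheme never produces or verifies the analogous pair of exponents. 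As it stands the proposal is a plausible outline of the paper's strategy with its two key estimates left unproved.
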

 
 \begin{proof} 
Let $m\in \mathbb N$ and choose $q$ which depends on $m$ and is chosen below.
 Let $M=\max\{\|\phi\|_\infty, \|\psi\|_\infty\}$ and $h(x,y)= {\bf 1}_{\{-M\les y\les x\les M\}}$. Divide the interval $[-M,M]$ into $q$ subintervals $J_1,...,J_{q}$ of equal length $2Mq^{-1}$ and let
$$  I_j=\{(x,y): x\in J_j, -M\les y\les \min J_j\},\qquad  I= \bigcup_{j=1}^{q} I_j. $$
 Then $\mu_\phi\times \mu_\psi(\{(x,y):-M\les y\les x\les M\}\setminus I)\les K(2M)^r q^{-r}$ and the projective norm of $(u,v) \mapsto \tilde h_q(u,v)={\bf 1}_I(\phi(u),\psi(v))$ is bounded by (cf. \cite[Lemma 1]{DenkerGordin2014})
$$\|\tilde h_q\|_{L_{4,\pi}(\mu^2)}\les \sum_{j=1}^{q} \|{\bf 1}_{I_j}(\phi,\psi)\|_{L_4(\mu^2)}\les q \left[K(2M)^rq^{-r}\right]^{1/4}.$$
 Write
 $$ \hat h_q(u,v)=  \tilde h_q(u,v) -\int \tilde h_q(w,v) \mu(dw) -\int \tilde h_q(u,w)\mu(dw) +\iint \tilde h_q(w,w')\mu(dw)\mu(dw') $$
 and 
 $$ A_1=\sum_{k=1}^n\sum_{i=1}^m \hat h_q(T^i(u),T^k(v)).$$
 Then $\|\hat h_q\|_{L_{4,\pi}(\mu^2)}= O(\|\tilde h_q\|_{L_{4,\pi}(\mu^2)})$ and applying  Lemma 4 in \cite{DenkerGordin2014} with $d=m=2$ and $p=4$ (one can verify the assumption in this lemma for $\hat h_q$) it follows that there is a constant  $C$ (independent of $q$ and $(n,m)$) such that
 $$ \left\| A_1\right\|_{L_2(\mu)} \les C \sqrt{nm}\|\hat h_q\|_{L_{4,\pi}(\mu^2)} =O\left(m q^{1-r/4}\right).$$
 Moreover, we get
 $$ \iint \left|\sum_{k=1}^n\sum_{i=1}^m  h(\phi(T^i(u)),\psi(T^k(v)))-\tilde h_q (T^i(u),T^k(v)) \right| \mu(du)\mu(dv)\les K (2M)^r q^{-r}nm.$$
 Similar estimates hold for the other summand $A_2=A-A_1$.
 
 Since $1\geqslant r>\frac 45$, we have that $0<2-\frac r 2< 2r$ hence can pick
 $$\frac 1{2r}<\tau < \frac 1{2-\frac r2}$$
 and $q= m^\tau$ to obtain
 $$ m^{-3} \mathbb E(A_1^2)= O(m^{-3}m^2q^{2-r/2})=O(m^{-1+2\tau-\frac {r\tau}2})= o(1)$$
 and
 $$  m^{-3/2}\mathbb E|A-A_1|=O(m^{-3/2}m^2 q^{-r}) = O(m^{\frac 12 -r\tau})= o(1). $$
 \end{proof}

 Since $\phi,\psi\in \tilde L$ they are approximated in $L^2$ by functions $\phi_m,\psi_n\in L$. Set $F_\phi$ and $F_\psi$ for the respective distribution functions of $\phi$ and $\psi$. Denote
 \begin{eqnarray*}
  \tilde B_n &:=& \sum_{k=1}^n (1-F_\phi(\psi_n\circ T^k)) - n\int (1- F_\phi(s)) \mu_{\psi_n}(ds)\\
 \tilde C_m&:=&\sum_{k=1}^m  F_\psi(\phi_m\circ T^k) -m\int F_\psi(s) \mu_{\phi_m}(ds)
 \end{eqnarray*}
 and
% $$ \sigma_{\psi_n}^2 :=\mbox{\rm Var} (\tilde B_n)\qquad \sigma_{\phi_m}^2:=\mbox{\rm  Var}(\tilde C_m)$$
$$ \sigma_m^2:= m^2 \Var(\tilde B_{n(m)}) + n(m)^2  \Var(\tilde C_m) + 2 n(m)m \mbox{\rm Cov}(\tilde B_{n(m)}, \tilde C_m).$$ 
 We are not developing more details and extensions of the forgoing discussions, instead we assume that
 \begin{equation}\label{eq:ass1}
 \mbox{\rm Var}(A)=o(\sigma_m^2)
 \end{equation}
% \begin{equation}\label{eq:ass2}
% \|\phi-\phi_m\|_1 =o( m^{-1}\sigma_{\phi_m})\qquad \|\psi-\psi_n\|_1=o(n^{-1}\sigma_{\psi_n})
% \end{equation}
 \begin{equation}\label{eq:ass2}
 \|\phi-\phi_m\|_1 =o( n^{-1}m^{-1}\sigma_m)\qquad \|\psi-\psi_n\|_1=o(n^{-1}m^{-1}\sigma_m)
 \end{equation}
 \begin{equation}\label{eq:ass3}
  D_\alpha\phi_m=o(m^2) \qquad D_\alpha\psi_n=o(n^2)
 \end{equation}
 and that $F_\psi$ and $F_\phi$ are Lipschitz continuous. 
Under these simplifying assumptions the following argument becomes short and shows the pattern of the proof under  more general assumptions.
 
 \begin{proposition} Under the assumptions \eqref{eq:ass1} and \eqref{eq:ass2} and \eqref{eq:ass3} and if $n=n(m)$ so that 
 for some $\lambda\in (0,1)$, $\lambda\les n/m\les\lambda^{-1}$
% \begin{equation}\label{eq:simphipsi}\lambda\les\frac{\sigma_{\psi_{n(m)}}}{\sigma_{\phi_m}},\quad \frac{n(m)}{m}\les \lambda^{-1}
% \end{equation} 
and that 
 \begin{equation}\label{eq:varstatlind}
 \liminf \sigma_m m^{-3/2}>0,
 \end{equation} then as $m\to \infty$
 $$ \frac 1{\sigma_m} \left(W_{m,n(m)}- D \right)\Rightarrow \mathcal N(0,1).$$
 \end{proposition}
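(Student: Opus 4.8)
The plan is to peel off the degenerate piece $A$, replace the exact conditional means by their $L$-approximations, and recognize what remains as a single position-dependent dynamical Birkhoff sum to which Corollary~\ref{thm:CLTone} (equivalently Theorem~\ref{thm:CLTdarray}) applies.

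First, since $\mathbb E A=0$ and $\Var(A)=o(\sigma_m^2)$ by \eqref{eq:ass1}, Chebyshev's inequality gives $A/\sigma_m\to0$ in probability. Next, writing $\int{\bf 1}_{\{Y_k\les t\}}\mu_\phi(dt)=1-F_\phi(\psi\circ T^k)$ (no atoms) and using that $F_\phi,F_\psi$ are Lipschitz one obtains $\|B_n-\tilde B_n\|_1\les 2\operatorname{Lip}(F_\phi)\,n\,\|\psi-\psi_n\|_1$ and the analogous bound for $C_m-\tilde C_m$, so by \eqref{eq:ass2} the quantity $\frac1{\sigma_m}\bigl(m(B_n-\tilde B_n)+n(C_m-\tilde C_m)\bigr)\to0$ in $L^1$. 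Since $\sigma_m^2=\Var(m\tilde B_{n}+n\tilde C_m)$ exactly, Slutsky's lemma reduces everything to showing that $\frac1{\sigma_m}(m\tilde B_{n}+n\tilde C_m)\Rightarrow\mathcal N(0,1)$, i.e. a CLT for this sum normalized by its own standard deviation.

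Now write $m\tilde B_n+n\tilde C_m=\sum_{k=1}^{\max(m,n)}f_{m,k}\circ T^k$ with
$$f_{m,k}:=-m\,{\bf 1}_{\{k\les n\}}\Bigl(F_\phi\circ\psi_n-\int F_\phi\,d\mu_{\psi_n}\Bigr)+n\,{\bf 1}_{\{k\les m\}}\Bigl(F_\psi\circ\phi_m-\int F_\psi\,d\mu_{\phi_m}\Bigr)\in L.$$
This is centered, takes only three values in $k$ (the thresholds being $k=n$ and $k=m$), satisfies $\|f_{m,k}\|_\infty\les m+n$ because $F_\phi,F_\psi$ are $[0,1]$-valued, and satisfies $D_\beta f_{m,k}=o(m^3)$ by \eqref{eq:ass3} and $n\asymp m$. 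This is precisely the situation of Corollary~\ref{thm:CLTone}, with the single iterated observable $f_n\circ T^j$ replaced by the position-dependent $f_{m,k}\circ T^k$, so I would run the same big-block/small-gap construction: block length $l_m=\lfloor m^{1/3}\rfloor$, gap length $g_m=\lceil 3(-\log\rho)^{-1}\log m\rceil$, $k_m=\lfloor\max(m,n)/(l_m+g_m)\rfloor$, $\tau_{m,i}=(i-1)(l_m+g_m)$, $F_{m,i}=\sum_{j=1}^{l_m}f_{m,\tau_{m,i}+j}\circ T^{j-1}$, discarding the $k_m$ length-$g_m$ gap sums. Then $k_m\to\infty$ and $k_m^2\rho^{g_m}\les k_m^2m^{-3}\to0$, which is \eqref{eq:smallkdarray}; the Lindeberg condition \eqref{eq:Lindebergdarray} is immediate because $\|F_{m,i}\|_\infty\les l_m(m+n)=o(m^{3/2})$, which is $o(\check s_m)$ once $\check s_m\asymp\sigma_m\gtrsim m^{3/2}$ is known, so the sets $\{|F_{m,i}|\ges\epsilon\check s_m\}$ are eventually empty; and \eqref{eq:condextradarray} follows from $r^{l_m}\|F_{m,i}\|=O(1)\max_k\|f_{m,k}\|=o(m^3)$ (by \eqref{eq:cmpgrow}), since then $\rho^{g_m}\sum_i r^{l_m}\|F_{m,i}\|/\check s_m\les\rho^{g_m}k_m\,o(m^{3/2})\to0$.

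What is left is the variance bookkeeping, the analogue of \eqref{eq:simvar}--\eqref{eq:gapdistr}: that $\check s_m/\sigma_m\to1$ and that the discarded gap sums are $o(\sigma_m)$ in $L^2$. This I would handle exactly as \eqref{eq:sumvar} was handled, by expanding the relevant variances and covariances with the transfer operator and invoking \eqref{eq:estNdiff}--\eqref{eq:estN}: the large Hölder norms $D_\beta\psi_n,D_\beta\phi_m$ enter every cross term only multiplied by a factor $r^{l_m}$ coming from a transfer operator applied across a full block, and $r^{l_m}=e^{-\Theta(m^{1/3})}$ dominates the polynomial $o(m^3)$; combined with the uniform bound $\|f_{m,k}\|_\infty=O(m)$ and summability of the correlations of the $f_{m,k}$ this gives $\Var(\text{gap sums})=O(m^{8/3}(\log m)^2)=o(\sigma_m^2)$ and $\operatorname{Cov}(\text{gap sums},\text{block sums})=o(\sigma_m^2)$. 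Theorem~\ref{thm:CLTdarray} then yields $\frac1{\check s_m}\sum_i F_{m,i}\circ T^{\tau_{m,i}}\Rightarrow\mathcal N(0,1)$, and assembling this with the two reductions gives $\frac1{\sigma_m}(W_{m,n(m)}-D)\Rightarrow\mathcal N(0,1)$. I expect the variance bookkeeping to be the main obstacle: it is there that one must check that \eqref{eq:ass3}'s polynomial control on the Hölder norms is exactly enough, and that having only $\liminf\sigma_m m^{-3/2}>0$ available (rather than $\sigma_m^2\asymp m^3$) is precisely what forces $g_m$ to be taken a sufficiently large multiple of $\log m$.
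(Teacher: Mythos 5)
Your proposal is correct and follows essentially the same route as the paper: the same two preliminary reductions (discarding $A$ via \eqref{eq:ass1}, and swapping $B_{n}, C_m$ for $\tilde B_{n}, \tilde C_m$ via the Lipschitz continuity of $F_\phi, F_\psi$ and \eqref{eq:ass2}), followed by the same big-block/logarithmic-gap decomposition fed into Theorem~\ref{thm:CLTdarray}, with the same variance bookkeeping for $\check s_n/\sigma_m\to 1$ and the discarded gaps. The only cosmetic differences are the parameter choices (the paper takes block length $O(m^{1/2-\delta})$ and gap $\lfloor 2\log m/(-\log\rho)\rfloor$, and verifies condition 3' rather than 3) and the Lindeberg check, which the paper does via a fourth-moment Chebyshev bound $\int F_{n,i}^4\,d\mu=O(m^2p^2)\int F_{n,i}^2\,d\mu$ rather than your observation that the truncation sets are eventually empty.
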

Note that in case  the distributions  of $\phi$ and $\psi$ are equal, then 
 $$D= \frac{mn}{2}+ \frac {m(m-1)}{2}= \frac m2(n+m-1).$$
 This shows that the two sample Wilcoxon rank sum test checks whether the distributions of $\phi$ and $\psi$ differ by a location alternative.

 \begin{proof}  We first show that $ \frac 1{\sigma_m} (W_{m,n(m)}-D)$ and 
 $ \frac 1{\sigma_m}(m\tilde B_{n(m)} + n(m)\tilde C_m)$ have the same limiting distribution. This follows from the assumption (\ref{eq:ass1}) and  (using (\ref{eq:ass2}))
 $$ m\|B_{n(m)}-\tilde B_{n(m)}\| _1 \les  2n(m)mD_{F_\phi} \|\psi-\psi_{n(m)}\|_1 =o(\sigma_m)$$ 
where $D_{F_\phi}$ denotes the corresponding Lipschitz constant and from a similar inequality for $n\|C_m-\tilde C_m\|$.
% Let
% $$ \Sigma_m= \left(\begin{matrix} m^2\sigma_{\phi_{n(m)}}^2 & n(m)m\mbox{\rm Cov}(\tilde B_{n(m)}, \tilde C_m)\\
% n(m)m\mbox{\rm Cov}(\tilde B_{n(m)}, \tilde C_m) & n(m)^2 \sigma_{\phi_m}^2
% \end{matrix}
%  \right)
%  $$
%  and define $\sigma_m^2= (1,1)\Sigma_m (1,1)^T$.
%  
Hence the assertion of the proposition follows  if 
 $$  V_m=\frac{1}{\sigma_m} (m\tilde B_{n(m)} +    n(m) \tilde C_m)$$
 converges weakly to the standard  normal distribution.

We apply Theorem \ref{thm:CLTdarray}. Let w.l.o.g. $n\les m$, $\theta_{\phi_m}=\iint{\bf 1}_{\{\psi\les \phi_m\}} d\mu_\psi d\mu_{\phi_m}$, $\theta_{\psi_n}=\iint{\bf 1}_{\{\phi\les \psi_n\}} d\mu_{\psi_n} d\mu_\phi$, $n= k_n(p+q)+q_n$ and $m= k_m(p+q) + q_m$ where $0\les q_n, q_m< p+q$. Denote
 \begin{eqnarray*}
 && F_{n,i}= \sum_{l=0}^{p-1} \left[n (F_\psi(\phi_m\circ T^l)-\theta_{\phi_m})-{m}(F_\phi(\psi_n\circ T^l)-\theta_{\psi_n})\right]\quad 1\les i\les k_n,\\
 && F_{n,i}= \sum_{l=0}^{p-1}\left[n (F_\psi(\phi_m\circ T^l)-\theta_{\phi_m}) \right] \qquad k_n+1\les i\les k_m,\\
 && \tau_{n,i}= (i-1)(p+q)\quad i=1,...,k_m,\quad \check s_{n}^2=\Var(\sum_{i=1}^{k_m}{F_{n,i}\circ T^{\tau_n,i}}).
 \end{eqnarray*}
 
We check next that conditions 1.--4. in Theorem \ref{thm:CLTdarray} hold with an appropriate choice of $p$ and $q$. Let $r$ and $\rho$ be the constants related to the decomposition \eqref{eq:trsfdecomp2} which are given by the system. Choosing $p=O(m^{\frac12-\delta})$ for some $0<\delta<\frac 12$ and $q =\lfloor\frac{2\log m}{-\log \rho}\rfloor$ it follows that
 $$ k_m^2\rho^{q}=O(m^{-1+2\delta}),$$
 hence 2. holds.  Since $\|F_{n,i}\|\les r^{-p} (nD_\alpha(F_\psi\circ \phi_m) +mD_\alpha(F_\phi\circ \psi_n)) + O(mp)$ for $i=1,...,k_n$ and similarly for $i=k_n+1,...,k_m$ we calculate:
 \begin{align*} \rho^q k_m r^p \|F_{n,i}\|\|F_{n,i}\|_1 \sigma_m^{-2}&= O(m^{-\frac 32+\delta} (m(D\phi_m+D\psi_n)+ r^pmp)mp\sigma_m^{-2})\\
 &=O((D_\alpha\phi_m + D_\alpha\psi_n)m^{-2})\overset{\eqref{eq:ass3}}{\to}0\end{align*}
and $$ \int F_{n,i}^2 {\bf 1}_{\{|F_{n,i}| \ges \epsilon\check s_n\}} d\mu \les {\epsilon^{-2} \check s_n^{-2}}\int F_{n,i}^4 d\mu =  O(m^2p^2) \check{s}_n^{-2}  \int F_{n,i}^2 d\mu$$
   and hence $$\sigma_m^{-2}\sum_{i=1}^{k_m}\int F_{n,i}^2 {\bf 1}_{\{|F_{n,i}| \ges \epsilon\check s_n\}} d\mu=O(m^{-2\delta})\check s_n^{-2}\sum_{i=1}^{k_m}\int F_{n,i}^2d\mu.$$ It is straightforward to show using the calculus developed in this article (see \eqref{eq:sumvar} and \eqref{eq:simvar} and   observing \eqref{eq:varstatlind} and that $|F_\psi\circ \phi_m|$ and $|F_\phi\circ\psi_n|$ are bounded) that
$$ \check s_n/\sigma_m = 1+ O(\sqrt{D\phi_m + D\psi_n}/ m)\overset{\eqref{eq:ass3}}{\to} 1$$
and
$$ \check s_n^{-2} \sum_{i=1}^{k_m} \int F_{n,i}^2 d\mu \to 1 .$$ Therefore conditions 1., 3'. and 4. hold.
   
   It is proved now that
   $$ \frac 1{\sigma_m} \sum_{i=1}^{k_m} F_{n,i}$$
   converges weakly to the standard normal distribution. We finally remark that $\frac 1{\sigma_m} \sum_{i=1}^{k_m} F_{n,i}$ is stochastically equivalent to $V_m$, since the variance of the difference is bounded by
 $$ \sigma_m^{-2} k_m q^2 m^2= O(m^{-\frac 12+\delta} (\log m)^2 )=o(1)$$
were one uses the same estimates as for the comparison of $\check s_n$ and $\sigma_m$. This finishes the proof. 
 \end{proof}
 
{\noindent{\bf Acknowledgment:}
{The research was supported  by  {\it n\'umero 158/2012 de Pesquisador Visitante Especial de CAPES}. XZ was also supported by PNPD/CAPES.}
}

\bibliographystyle{plain}

%\bibliography{../bib0}
\end{document}